\newcommand{\BA}{{\mathbb {A}}}
\newcommand{\BC}{{\mathbb {C}}}
\newcommand{\BL}{{\mathbb {L}}}
\newcommand{\BQ}{{\mathbb {Q}}}
\newcommand{\BR}{{\mathbb {R}}}
\newcommand{\CA}{{\mathcal {A}}}
\newcommand{\CE}{{\mathcal {E}}}
\newcommand{\CG}{{\mathcal {G}}}
\newcommand{\CH}{{\mathcal {H}}}
\newcommand{\CL}{{\mathcal {L}}}
\newcommand{\CO}{{\mathcal {O}}}
\newcommand{\CP}{{\mathcal {P}}}
\newcommand{\CS}{{\mathcal {S}}}
\newcommand{\CX}{{\mathcal {X}}}
\newcommand{\RB}{{\mathrm {B}}}
\newcommand{\RE}{{\mathrm {E}}}
\newcommand{\RG}{{\mathrm {G}}}
\newcommand{\RH}{{\mathrm {H}}}
\newcommand{\RK}{{\mathrm {K}}}
\newcommand{\RL}{{\mathrm {L}}}
\newcommand{\RM}{{\mathrm {M}}}
\newcommand{\RN}{{\mathrm {N}}}
\newcommand{\RP}{{\mathrm {P}}}
\newcommand{\RT}{{\mathrm {T}}}
\newcommand{\RU}{{\mathrm {U}}}
\newcommand{\RZ}{{\mathrm {Z}}}
\newcommand{\Ad}{{\mathrm{Ad}}}
\newcommand{\Aut}{{\mathrm{Aut}}}
\newcommand{\GL}{{\mathrm{GL}}}
\renewcommand{\Re}{{\mathrm{Re}}}
\newcommand{\Res}{{\mathrm{Res}}}
\newcommand{\rk}{{\mathrm{k}}}
\newcommand{\od}{\operatorname{d}}
\renewcommand{\v}{\mathfrak v}
\renewcommand{\rk}{\mathrm k}
\newcommand{\Z}{\mathbb{Z}}
\newcommand{\C}{\mathbb{C}}
\newcommand{\R}{\mathbb R}
\newcommand{\A}{\mathbb{A}}
\newcommand{\be}{\begin {equation}}
\newcommand{\ee}{\end {equation}}
\newcommand{\bee}{\begin {equation*}}
\newcommand{\eee}{\end {equation*}}
\theoremstyle{Theorem}
\theoremstyle{Theorem}
\newtheorem*{theoremA'}{Theorem A'}
\theoremstyle{Theorem}
\theoremstyle{Theorem}
\newtheorem{prp}{Proposition}
\newtheorem{lemp}[prp]{Lemma}
\newtheorem{thmp}[prp]{Theorem}
\newtheorem{conj}[prp]{Conjecture}
\theoremstyle{Plain}
\theoremstyle{Definition}
\numberwithin{equation}{section}
\begin{document}

	\title[Deligne's conjecture for Hecke characters]{On Deligne's conjecture for Hecke characters}
	
	\author[Y. Jin]{Yubo Jin}
	\address{Institute for Advanced Study in Mathematics, Zhejiang University\\
		Hangzhou, 310058, China}\email{yubo.jin@zju.edu.cn}
	
	\author[D. Liu]{Dongwen Liu}
	\address{School of Mathematical Sciences,  Zhejiang University\\
		Hangzhou, 310058, China}\email{maliu@zju.edu.cn}

	\author[B. Sun]{Binyong Sun}
	\address{Institute for Advanced Study in Mathematics and  New Cornerstone Science Laboratory, Zhejiang University\\
		Hangzhou, 310058, China}\email{sunbinyong@zju.edu.cn}

	\subjclass[2020]{11F67, 11F75}
	\keywords{Deligne's conjecture, Hecke L-function, Eisenstein cohomology}

	\maketitle
	\begin{abstract}
		This paper provides a proof of Deligne's conjecture for critical values of Hecke L-functions following a strategy originated by Harder and Schappacher.
	\end{abstract}
	
	\tableofcontents
	\section{Introduction}
	
		One of the central problems in number theory is the study of special values of L-functions. The celebrated conjecture of Deligne \cite{De79} asserts the rationality (up to Deligne periods) of critical values of L-functions attached to pure motives. In this paper, we study Deligne's conjecture for motives associated to algebraic Hecke characters. This conjecture is recently proved in \cite{Ku24} using the Eisenstein-Kronecker classes constructed in \cite{KS25}. We provide another proof following the strategy sketched by Harder and Schappacher in \cite{HS85}.

	Let $\RK$ be a number field. Denote by $\A_\RK$ its adèle ring and by $\CE_\RK$ the set of all field embeddings $\RK \hookrightarrow\C$ (similar notation will apply to other number fields). Let $\chi$ be an algebraic Hecke character of  $\RK$ with values in a number field $\RE$. 
    More precisely, $\chi: \mathbb A_\RK^\times\rightarrow \RE^\times$ is a locally constant homomorphism such that the restriction $\chi|_{\RK^\times}: \RK^\times \rightarrow \RE^\times$ (uniquely) extends to an algebraic homomorphism 
    \be\label{wchi}
   w_\chi=\{w_{\chi,\rho}\}_{\rho\in \CE_\RE}: (\RK\otimes_\mathbb Q \C)^\times=(\C^\times)^{\CE_{\RK} }\rightarrow (\RE\otimes_\mathbb Q \C)^\times =(\C^\times)^{\CE_{\RE}}
    \ee
    of complex algebraic tori (cf. \cite[Chapter 0, Section 1]{Sc88}). Here and henceforth, a superscript `$\,^\times$' over a ring indicates the multiplicative group of its invertible elements.  
   For every  $\rho\in\CE_{\RE}$, write ${^{\rho}\chi}: \RK^\times\backslash \A_\RK^\times\rightarrow \C^\times$ for the character such that 
        \be\label{rhochi}
        {^{\rho}\chi}(x)=((\rho\circ \chi)(x))\cdot w_{\chi,\rho}^{-1}(x_\infty)
        \ee
        for all $x=(x_\infty,x_{\mathrm{f}})\in \A_\RK^\times$, where $x_\infty\in (\RK\otimes_{\mathbb Q} \R)^\times$ and $x_\mathrm f\in (\RK\otimes_{\mathbb Z} \widehat \Z)^\times$ are  respectively the archimedean and non-archimedean components of $x$. Here $\widehat{\mathbb Z}$ denotes the profinite completion of $\mathbb Z$. 
      
        Denote by $\RL(s,{^{\rho}\chi})=\RL_\mathrm f(s,{^{\rho}\chi})\cdot \RL_\infty(s,{^{\rho}\chi})$ the (complete) Hecke L-function of ${^{\rho}\chi}$,  where $\RL_{\mathrm{f}}(s,{^{\rho}\chi})$ and $\RL_{\infty}(s,{^{\rho}\chi})$ are the finite and infinite parts respectively. Put
	\[
	\begin{aligned}
	\BL(s,\chi)&:=\{\RL(s,{^{\rho}\chi})\}_{\rho\in\CE_{\RE}},\\ 	\BL_{\mathrm{f}}(s,\chi)&:=\{\RL_{\mathrm{f}}(s,{^{\rho}\chi})\}_{\rho\in\CE_{\RE}},\\ 	\BL_{\infty}(s,\chi)&:=\{\RL_{\infty}(s,{^{\rho}\chi})\}_{\rho\in\CE_{\RE}},	
	\end{aligned}
	\]
	viewed  as $\RE\otimes_{\BQ}\C=\C^{\CE_{\RE}}$-valued meromorphic functions on the complex plane $\C$. We say that $\chi$ is critical if $s=0$ is not a pole of $\RL_{\infty}(s,{^{\rho}\chi})$ or $\RL_{\infty}(1-s,{^{\rho}\chi}^{-1})$. Note that this definition is independent of $\rho\in \CE_\RE$ due to the purity lemma and the classification of algebraic Hecke characters (see \cite[Lemma 4.9]{Clo} and \cite[Chap. 0, Section 3]{Sc88}). Moreover, $\BL_{\mathrm{f}}(s,\chi)$ is holomorphic at $s=0$ whenever $\chi$ is critical (see \cite[Theorem 4.4.1]{Tate}).

	 By \cite[Theorem I.4.1, I.5.1]{Sc88}, there is a unique (up to isomorphism) motive $\RM(\chi)$ defined over $\RK$ with coefficients in $\RE$ associated to 
    $\chi$ in the sense of  \cite[Definition I.3.3]{Sc88}. 
    Denote by
	\[
	\mathrm{c}^+(\chi)\in(\RE\otimes_{\BQ}\C)^{\times}
	\]
	its Deligne period, which is uniquely defined up to multiplication by $\RE^{\times}$.  
    See \cite{De79, Ku24, Sc88} for more details on motives and periods for Hecke characters. Deligne's conjecture for Hecke characters is stated as follows.
	
	\begin{conj}\label{deligne}
		Let $\chi$ be an algebraic Hecke character of a number field $\RK$ with values in a number field $\RE$. 
        Assume that $\chi$ is critical. Then 
		\[
		\frac{\BL_{\mathrm{f}}(0,\chi)}{\mathrm{c}^+(\chi)}\in\RE \subset \RE\otimes_\BQ\BC.
		\]
	\end{conj}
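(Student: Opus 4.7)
The plan is to follow the Harder--Schappacher framework, realizing the Hecke L-function $\RL(s, {}^{\rho}\chi)$ via Eisenstein series on $G = \Res_{\RK/\BQ}\GL_2$ and exploiting the rational structure on the Eisenstein cohomology of the associated locally symmetric space. Since Conjecture~\ref{deligne} concerns only $s=0$, other critical integers for a given $\chi$ are subsumed by replacing $\chi$ with its twist by an integral power of the norm character.

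First, the cohomological setup. Let $S_G$ denote the adelic locally symmetric space of $G$, and choose an algebraic coefficient system $\widetilde{\CM}_{\lambda,\RE}$ with an $\RE$-rational structure such that $\chi$, paired with an auxiliary Hecke character $\chi'$, arises as an inducing datum for a principal series contributing to the boundary cohomology with coefficients in $\widetilde{\CM}_{\lambda,\RE}$. Harder's theory of Eisenstein cohomology then produces an $\RE$-rational, Hecke-equivariant class $\mathrm{Eis}(\omega)$ whose restriction to the Borel--Serre boundary $\partial S_G$ splits, up to Weyl-group symmetry, into two summands related by the standard intertwining operator.

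Second, extract rationality from the constant term formula. Both boundary components lie in the $\RE$-rational boundary cohomology; equating them and isolating the ratio of $\BL_{\mathrm f}$-factors produced by the intertwining operator expresses
\[
\frac{\BL_{\mathrm{f}}(0,\chi/\chi')}{P_\infty(\chi,\chi')}\in\RE,
\]
where $P_\infty(\chi,\chi')$ is the archimedean quotient arising from the cohomological realization at infinity. By a judicious choice of $\chi'$ one inducts on the ``complexity'' of the infinity type, reducing the conjecture for $\chi$ to the case of a base character (e.g.\ a Tate twist of an Artin-type character) where the conjecture is classical, thereby bootstrapping to arbitrary critical $\chi$.

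The principal obstacle is identifying $P_\infty(\chi,\chi')$, a representation-theoretic quantity built from cohomological realizations of the archimedean components ${}^{\rho}\chi_\infty$, with the motivic Deligne period $\mathrm c^+(\chi)$ defined through the Betti--de~Rham comparison on Schappacher's motive $\RM(\chi)$. Without the simplifying hypothesis that $\RK$ is totally real or CM, this demands careful bookkeeping of the full infinity type $\{w_{\chi,\rho}\}_{\rho\in\CE_\RE}$, preservation of $\Aut(\C/\BQ)$-equivariance throughout the construction (to land in $\RE$ rather than merely in $\overline\BQ$), and control of the transcendental periods of $\RK$ itself. A secondary obstacle is the possible degeneracy of the induced principal series at $s=0$---poles of intertwining operators or reducibility---which may force one to work with a suitably regularized Eisenstein class or with its derivative in the spectral parameter.
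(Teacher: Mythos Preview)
Your proposal diverges from the paper's route in several structural ways, and the divergence matters because your admitted ``principal obstacle'' is exactly what the paper's setup is designed to dissolve.

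First, the group. You work with $\Res_{\RK/\BQ}\GL_2$; the paper (and the original Harder--Schappacher sketch) works with $\GL_n$ over the maximal CM subfield $\rk$, where $n=[\RK:\rk]$, and embeds the torus $\RH=\Res_{\RK/\rk}\GL_1$ into $\GL_n$ via the regular representation of $\RK$ on $\rk^n$. Second, the mechanism. You extract L-ratios from the constant term along the Borel--Serre boundary; the paper instead evaluates a \emph{toroidal integral} of a degenerate Eisenstein series over $\RH'(\rk)\backslash\RH'(\A_\rk)$, which unfolds directly to $\RL(s,{}^\rho\chi)/\RL(s,{}^\rho\check\chi)$ with $\check\chi:=\chi|_{\A_\rk^\times}$. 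Third, the reduction. Rather than an induction on the infinity type, the paper reduces in a single step to the CM case by proving Theorem~\ref{mainthm} and then invoking Blasius for $\check\chi$.

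The reason this reorganization bypasses your obstacle is Proposition~\ref{prp:period}: Schappacher's explicit formula $c^+({}^\rho\chi)/c^+({}^\rho\check\chi)=\Omega({}^\rho\chi)$, with $\Omega({}^\rho\chi)$ a concrete product of $\delta(\rk_0)^{n-1}$ and the relative discriminant determinants $\delta(\RK/\rk,\iota)$, whose $\Aut(\C)$-behavior is a computable sign $(-1)^{\mathbf n(\sigma,{}^\rho\chi)}$. The archimedean modular symbol is then calculated to be literally $1$ (Lemma~\ref{lem:archimedeanmodularsymbol}), and the $\Aut(\C)$-equivariance of the global modular symbol, via the rationality of Eisenstein cohomology established in \cite{JLS2}, produces the matching sign. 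In your $\GL_2/\RK$ constant-term picture, by contrast, the archimedean factor $P_\infty(\chi,\chi')$ has no such closed form tied to $c^+(\chi)$, and your proposed induction has no genuine descent: both $\chi$ and $\chi'$ are characters of the \emph{same} field $\RK$, so comparing them does not move you toward a known base case over a smaller field. Without a concrete period identification and a well-founded inductive scheme, the central step of your outline remains unfilled; what the paper gains by changing the group to $\GL_n/\rk$ is precisely a toroidal period whose motivic counterpart is already computed in the literature.
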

	
	Note that $\chi$ can be critical only when $\RK$ is totally real or $\RK$ contains a CM field. When $\RK$ is totally real, Conjecture \ref{deligne} is proved by Siegel and Klingen \cite{Si37, Kl61}, and independently by Shintani \cite{Shi76, Shi81}. When $\RK$ is a CM field, Conjecture \ref{deligne} is proved by Blasius \cite{Bl86}. It is completely proved by Kufner \cite{Ku24} for arbitrary number fields that contain a CM field; the reader may refer there for more historical remarks and relevant works. Kufner's approach generalizes the strategy of \cite{Bl86} and makes use of the Eisenstein-Kronecker classes constructed in \cite{KS25}.
	
	Dating back to 1973, Harder originated the study of Eisenstein cohomology in a series of papers \cite{Ha73, Ha75, Ha79, Ha81, Ha82, Har87, Har90, Ha93}, and in \cite{HS85} he outlined a strategy for proving Conjecture \ref{deligne} when $\RK$ contains a CM field $\rk$. When $[\RK:\rk]=2$, the necessary theory was developed in \cite{Har87}, but in the general case the proof remained incomplete (see also the Introduction of \cite{Ku24}). The aim of the present paper is to complete the proof of Conjecture \ref{deligne} following the approach of \cite{HS85}. 
	
	We state our main theorem as follows, which implies Conjecture \ref{deligne} based on the results of \cite{Si37, Kl61, Bl86}. The main ingredients for the proof are the study of a toroidal integral for Eisenstein series and the rationality of Eisenstein cohomology established in \cite{JLLS}.

	\begin{thmp}\label{mainthm}
		Let $\chi$ be a critical algebraic Hecke character of a number field $\RK$ with values in a number field $\RE$. Assume that $\RK$ contains a \emph{CM} field and let $\rk$ be the maximal \emph{CM} subfield of $\RK$. 
        Set $\check{\chi}:=\chi|_{\mathbb A_\rk^\times}$. 
        Then
		\begin{equation}\label{quotient}
			\frac{\mathrm{c}^+(\check{\chi})}{\mathrm{c}^+(\chi)}\cdot\frac{\BL_{\mathrm{f}}(0,\chi)}{\BL_{\mathrm{f}}(0,\check{\chi})}\in\RE \subset \RE\otimes_\BQ\BC.
		\end{equation}
	\end{thmp}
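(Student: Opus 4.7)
The plan is to realize the left-hand side of \eqref{quotient} as a toroidal period integral of an Eisenstein cohomology class, whose rationality follows from the rational structure established in \cite{JLS2}. This completes the Harder--Schappacher strategy of \cite{HS85} and extends \cite{Har87} beyond the case $[\RK:\rk]=2$.

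Set $n:=[\RK:\rk]$, and view $\chi$ as a character of the torus $T:=\Res_{\RK/\rk}\GL_1$, embedded as a maximal torus of $\GL_n$ over $\rk$. Let $E_\chi(g,s)$ be the Eisenstein series on $\GL_n(\A_\rk)$ obtained by parabolic induction from $(\chi,s)$ along a Borel subgroup $B\supset T$; its constant term is controlled by the completed L-function $\BL(s,\chi)$ via the associated intertwining operators. An appropriate evaluation at $s=0$ yields a cohomology class $[E_\chi]$ on the locally symmetric space $S$ attached to the $\BQ$-group $\Res_{\rk/\BQ}\GL_n$. By the main theorem of \cite{JLS2}, after normalization by $\mathrm{c}^+(\chi)$ together with an explicit archimedean factor, the class $[E_\chi]$ lies in the $\RE$-rational part of Eisenstein cohomology.

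Next restrict $[E_\chi]$ to the sub-cycle $S_{\check T}$ attached to the central subtorus $\check T:=\Res_{\rk/\BQ}\GL_1\subset\Res_{\rk/\BQ}\GL_n$, and pair against the $\RE$-valued character $\check\chi$. On the cohomological side, once $[E_\chi]$ is normalized as above, the resulting toroidal period is automatically $\RE$-rational, because $S_{\check T}$ and $\check\chi$ are both rational over $\RE$. On the analytic side, an unfolding computation identifies the same toroidal period, up to an $\RE^\times$-multiple of $\mathrm{c}^+(\check\chi)$, with the L-value ratio $\BL_{\mathrm{f}}(0,\chi)/\BL_{\mathrm{f}}(0,\check\chi)$; the denominator $\BL_{\mathrm{f}}(0,\check\chi)$ arises from the normalization of $E_\chi$ through its intertwining operators. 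Equating the two expressions yields \eqref{quotient}.

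The principal technical obstacle is the archimedean part of the unfolding: one must explicitly integrate a $(\g,K_\infty)$-cohomological representative of $E_\chi$ against the archimedean component of $\check\chi$, and identify the resulting transcendental factor, modulo $\RE^\times$, with the ratio $\mathrm{c}^+(\chi)/\mathrm{c}^+(\check\chi)$. This requires the non-vanishing of the relevant archimedean local zeta integrals, which for $n\ge 3$ goes beyond the calculation in \cite{Har87}; here the rational structure on Eisenstein cohomology for arbitrary parabolics established in \cite{JLS2} becomes decisive.
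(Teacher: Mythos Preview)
Your high-level strategy---realize the ratio as a toroidal period of an Eisenstein cohomology class and invoke the rationality from \cite{JLS2}---is indeed that of the paper, but the specific construction you propose does not work, and the errors are not merely cosmetic.

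First, the parabolic induction is set up incorrectly. The torus $T=\Res_{\RK/\rk}\GL_1$ is an \emph{elliptic} maximal torus of $\GL_n$ over $\rk$ (since $\RK$ is a field, not a product), so it is contained in no proper $\rk$-parabolic; in particular there is no Borel $B\supset T$ over $\rk$ from which to induce $\chi$. The paper instead forms the \emph{degenerate} principal series induced from the $(n-1,1)$ parabolic $\RP$, with inducing character $|\det|^{-1/2}\otimes \check\chi^{-1}|\cdot|^{(n-1)/2}$ on the Levi $\GL_{n-1}\times\GL_1$. Note that it is $\check\chi$, not $\chi$, that enters the inducing data; $\chi$ appears only in the period integral. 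For $n=2$ the $(n-1,1)$ parabolic happens to be a Borel, which may be the source of the confusion when extrapolating from \cite{Har87}.

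Second, the cycle is wrong. You restrict to the central subtorus $\check T=\Res_{\rk/\BQ}\GL_1$, but the center is already killed in the locally symmetric space, so $S_{\check T}$ has dimension $0$ and cannot be paired with a class in degree $d_n=\tfrac{[\rk:\BQ]}{2}(n-1)>0$. The correct cycle is the one attached to $\RH':=\RZ\backslash T$, i.e.\ the full non-split torus modulo center, which has exactly dimension $d_n$. The unfolding then hinges on the bijection $\RH'(\rk)\xrightarrow{\sim}\RP(\rk)\backslash\RG(\rk)$ (a consequence of Hilbert~90), after which the integral factorizes into Tate integrals for $\chi$ over $\RK_w^\times$; the factor $\RL(s,\check\chi)$ appears not from intertwining operators but from the Godement--Jacquet/Tate normalization of the section (see the paper's map $\mathrm f_{{}^\rho\chi_v}$).

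Finally, the period $\mathrm c^+(\chi)$ does not enter by normalizing the Eisenstein class. What \cite{JLS2} provides is $\Aut(\C)$-equivariance of the Eisenstein map; the ratio $\mathrm c^+(\chi)/\mathrm c^+(\check\chi)$ enters separately via Schappacher's formula $\mathrm c^+({}^\rho\chi)/\mathrm c^+({}^\rho\check\chi)=\Omega({}^\rho\chi)$, an explicit product of relative discriminants $\delta(\RK/\rk,\iota)$ and a power of $\delta(\rk_0)$. The heart of the argument is a sign bookkeeping: one computes the archimedean modular symbol to be $1$ for a specific generator, and then tracks how $\sigma\in\Aut(\C)$ permutes the embeddings to produce exactly the sign $(-1)^{\mathbf n(\sigma,{}^\rho\chi)}$ by which $\sigma$ moves $\Omega({}^\rho\chi)$. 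Your last paragraph gestures at this archimedean issue but does not identify the mechanism.
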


    Note that in the setting of Theorem \ref{mainthm}, $\check \chi$ is a critical  algebraic Hecke character of $\rk$ with values in $\RE$, and $\BL_{\mathrm{f}}(0,\check{\chi})\in(\RE\otimes_{\BQ}\C)^{\times}$ whenever $\RK\neq \rk$.	When $\RK=\rk$, the quotient $\frac{\BL_{\mathrm{f}}(0,\chi)}{\BL_{\mathrm{f}}(0,\check{\chi})}$ is understood as $1$, regardless of whether $\BL_{\mathrm{f}}(0,\check{\chi})\in\RE\otimes_{\BQ}\C$ is invertible or not. We remark that the integrality and the $p$-adic interpolation of the critical values of Hecke L-functions are also studied in \cite{KS25}. Our approach does not lead to an integrality result due to the lack of integrality of Eisenstein cohomology. On the other hand, Harder's denominator problem (see \cite{BS25, Ha93}) speculates that the denominator of certain Eisenstein cohomology classes is given by the special L-value. Knowing the integrality of critical Hecke L-values, the toroidal integral approach in the present paper may be used to bound these denominators for Eisenstein cohomology classes. See for example \cite{B08}. 

	\section{Algebraic Hecke characters and periods}
	
	Throughout the paper, let $\RK$ be a number field that contains a CM field $\rk$. We assume that $\rk$ is the maximal CM subfield of $\RK$ and set $n:=[\RK:\rk]$. We further assume that $n>1$ since otherwise Theorem \ref{mainthm} is trivial. Recall that $\CE_{\RK}$ (resp. $\CE_{\rk}$) is the set of all field embeddings $\RK\hookrightarrow\C$ (resp. $\rk\hookrightarrow\C$). To avoid confusion, we shall write $\tau$ for an element of $\CE_{\RK}$ while writing $\iota$ for an element of $\CE_{\rk}$.

Denote by $\Sigma_{\RK}$ (resp. $\Sigma_{\rk}$) the set of all places of $\RK$ (resp. $\rk$). To avoid confusion, we shall write $w$ to indicate a place of $\RK$ while writing $v$ for a place of $\rk$. For any $w\in\Sigma_{\RK}$ (resp. $v\in\Sigma_{\rk}$), denote by $\RK_w$ (resp. $\rk_v$) the completion at $w$ (resp. $v$) with the normalized absolute value $|\cdot|_w$ (resp. $|\cdot|_v$). Denote $\A_{\RK}=\RK_{\infty}\times \A_{\RK,\mathrm{f}}$ (resp. $\A_{\rk}=\rk_{\infty}\times \A_{\rk,\mathrm{f}}$) for the adèle ring of $\RK$ (resp. $\rk$) with  $\RK_{\infty}:=\RK\otimes_{\BQ}\R$ (resp. $\rk_{\infty}:=\rk\otimes_{\BQ}\R$) and $\A_{\RK,\mathrm{f}}$ (resp. $\A_{\rk,\mathrm{f}}$) the finite adèles. Set $|\cdot|_{\RK}:=\prod_{w\in\Sigma_{\RK}}|\cdot|_w$ and $|\cdot|_{\rk}:=\prod_{v\in\Sigma_{\rk}}|\cdot|_v$.
    
		\subsection{Algebraic Hecke characters}

	As in the Introduction, let $\chi: \mathbb A_\RK^\times \rightarrow \RE^\times$ be an algebraic Hecke character of $\RK$ with values in a number field $\RE$. 
	Set $\check{\chi}:=\chi|_{\mathbb A_\rk^\times}$. 
	Let $\rho\in\CE_{\RE}$. Write the character $w_{\chi,\rho}$ of \eqref{wchi} as 
	\[
	w_{\chi,\rho}=\prod_{\tau\in \CE_\RK} \tau^{{^{\rho}\chi}_{\tau}}:(\RK\otimes_{\BQ}\C)^{\times}\to\C^{\times},
	\]
	where ${^{\rho}\chi}_{\tau}\in \mathbb Z$, and by abuse of notation we write $\tau: (\RK\otimes_\mathbb Q \C)^\times=(\C^\times)^{\CE_{\RK}} \rightarrow \C^\times $ for the projection to the $\tau$-component. Similarly, for the algebraic Hecke character $\check{\chi}$, we have a character
    \[
w_{\check{\chi},\rho}=\prod_{\iota\in \CE_\rk} \iota^{{^{\rho}\check{\chi}}_{\iota}}:(\rk\otimes_{\BQ}\C)^{\times}\to\C^{\times}
    \]
with ${^{\rho}\check{\chi}}_{\iota}\in \mathbb Z$. Note that (by \cite[Chap. 0, Section 3]{Sc88})  
	\be \label{eq:weight}
	{^{\rho}\check{\chi}}_{\iota}=n\cdot{^{\rho}\chi}_{\tau}
	\ee
	for all $\iota\in\CE_{\rk}$ and $\tau\in\CE_{\RK}(\iota):=\{\tau\in\CE_{\RK}:\tau|_{\rk}=\iota\}$.
    

	We further assume that $\chi$ is critical. This is equivalent to saying  that
	\begin{equation}\label{critical}
		\min\{{^{\rho}\chi}_{\tau},{^{\rho}\chi}_{\overline{\tau}}\}\leq -1\quad\textrm{and}\quad \max\{{^{\rho}\chi}_{\tau},{^{\rho}\chi}_{\overline{\tau}}\}\geq 0,\quad\text{for all }\tau\in\CE_{\RK};
	\end{equation}
	or equivalently,
	\begin{equation}\label{CM}
		\min\{{^{\rho}\check{\chi}}_{\iota},{^{\rho}\check{\chi}}_{\overline{\iota}}\}\leq -n \quad\textrm{and}\quad \max\{{^{\rho}\check{\chi}}_{\iota},{^{\rho}\check{\chi}}_{\overline{\iota}}\}\geq 0,\qquad\text{for all }\iota\in\CE_{\rk}.
	\end{equation}
    Here and henceforth, for every $\tau\in\CE_{\RK}$, `$\overline{\tau}$' indicates the composition of $\tau$ with the complex conjugation, and similarly for every $\iota\in\CE_{\rk}$. By the purity lemma \cite[Lemma 4.9]{Clo}, conditions \eqref{critical} and \eqref{CM} are independent of $\rho\in\CE_{\RE}$.  
    
    We have decompositions
	\[
	\CE_{\RK}=\CE_{\RK}^-({^{\rho}\chi})\sqcup\CE_{\RK}^+({^{\rho}\chi})\qquad\textrm{and}\qquad \CE_{\rk}=\CE_{\rk}^-({^{\rho}\check{\chi}})\sqcup\CE_{\rk}^+({^{\rho}\check{\chi}})
	\]
	with
	\[
	\begin{aligned}
		\CE_{\RK}^-({^{\rho}\chi})&:=\{\tau\in\CE_{\RK}:{^{\rho}\chi}_{\tau}\leq -1\},&\qquad\CE_{\RK}^+({^{\rho}\chi})&:=\{\tau\in\CE_{\RK}:{^{\rho}\chi}_{\tau}\geq 0\},\\
		\CE_{\rk}^-({^{\rho}\check{\chi}})&:=\{\iota\in\CE_{\rk}:{^{\rho}\check{\chi}}_{\iota}\leq -n\},&\qquad\CE_{\rk}^+({^{\rho}\check{\chi}})&:=\{\iota\in\CE_{\rk}:{^{\rho}\check{\chi}}_{\iota}\geq 0\}.
	\end{aligned}
	\]
	
	\subsection{Periods associated to algebraic Hecke characters}\label{sec:order}
		Let 
	\[
	\begin{aligned}
			\mathrm{c}^+(\chi)&=\{\mathrm{c}^+(^\rho\chi)\}_{\rho\in\CE_{\RE}}\in (\RE\otimes_{\BQ}\C)^{\times},\\ 
            \mathrm{c}^+(\check{\chi})&=\{\mathrm{c}^+(^\rho\check{\chi})\}_{\rho\in\CE_{\RE}}\in (\RE\otimes_{\BQ}\C)^{\times}
	\end{aligned}
	\]
	be the Deligne periods associated to $\chi$ and $\check{\chi}$ respectively. We are going to determine the relation between $\mathrm{c}^+(\chi)$ and $\mathrm{c}^+(\check{\chi})$.

		Let $\rk_0$ be the maximal totally real subfield of $\rk$ and $\CE_{\rk_0}$ the set of all field embeddings $\rk_0\hookrightarrow\C$. We fix the total orders $\prec$ on $\CE_{\rk_0}$, $\CE_{\rk}$ and $\CE_{\RK}$ as follows.
	\begin{itemize}
		\item Fix an arbitrary total order on $\CE_{\rk_0}$ such that the elements of $\CE_{\rk_0}$ are enumerated as
        \[
        \iota_1'\prec\iota_2'\prec\dots\prec\iota_r',\qquad r:=[\rk_0:\BQ].
        \]
		\item Choose a total order on $\CE_{\rk}$ such that the elements of $\CE_{\rk}$ are enumerated as
        \[
\iota_1\prec\overline{\iota_1}\prec\iota_2\prec\overline{\iota_2}\prec\dots\prec\iota_r\prec\overline{\iota_r},
        \]
        where $\iota_i|_{\rk_0}=\iota_i'$ for every $1\leq i\leq r$.
		\item We fix a total order $\prec$ on $\CE_{\RK}(\iota)$ for each $\iota\in\CE_{\rk}$ such that the map
		\[
		\CE_{\RK}(\iota)\to\CE_{\RK}(\overline{\iota}),\quad \tau\mapsto\overline{\tau},
		\]
        is order-preserving for all $\iota\in\CE_{\rk}$.
		\item Finally, we impose the lexicographic order $\prec$ on $\CE_{\RK}$ determined by the total orders on $\CE_{\rk}$ and $\CE_{\RK}(\iota)$, $\iota\in\CE_{\rk}$.
	\end{itemize}

   For every $\sigma\in\mathrm{Aut}(\C)$, denote by $p_0(\sigma)\in\Z_{\geq 0}$ the permutation number of the permutation $\sigma\circ(\cdot)$ on the totally ordered set $\CE_{\rk_0}$. We further define $p(\sigma,\iota)\in\Z_{\geq 0}$ for each $\iota\in\CE_{\rk}$ as follows. Clearly, the diagram
    \[
	\begin{CD}
\CE_{\RK} @>\sigma\circ(\cdot)>> \CE_{\RK} \\
@VVV @VVV \\
\CE_{\rk} @>\sigma\circ(\cdot)>> \CE_{\rk} 
	\end{CD}
	\]
	commutes, where the vertical arrows are the restrictions (which are surjective). We view every $\sigma$ as the permutation $\sigma\circ(\cdot)$ on $\CE_{\RK}$, and (uniquely) decompose it as a product $\sigma=\sigma_2\circ\sigma_1$ of permutations such that
	\begin{itemize}
		\item $\sigma_1$ descends to the permutation $\sigma\circ(\cdot)$ on $\CE_{\rk}$, and induces an order-preserving bijection $\sigma_1:\CE_{\RK}(\iota)\mapsto\CE_{\RK}(\sigma\circ\iota)$ for each $\iota\in\CE_{\rk}$;
		\item $\sigma_2$ descends to the trivial permutation on $\CE_{\rk}$. 
	\end{itemize}
For each $\iota\in\CE_{\rk}$, we then define $p(\sigma,\iota)$ to be the permutation number of $\sigma_2$ on the totally ordered set $\CE_{\RK}(\iota)$.
    
	Fix a $\rk$-basis $\alpha_1,\ldots, \alpha_n$ of $\RK$. For each $\iota\in\CE_{\rk}$, we form a matrix
	\[
		D(\RK/\rk,\iota):=\begin{bmatrix}
		\tau_{i}(\alpha_j)
	\end{bmatrix}_{i,j=1,\ldots, n},\quad \text{where}\quad \CE_{\RK}(\iota)=\{\tau_1\prec\tau_2\prec\dots\prec\tau_n\},
	\]
	and define
	\[
	\delta(\RK/\rk,\iota):=\det\left(D(\RK/\rk,\iota)\right)\in\C^{\times}.
	\]
 We fix a square root $\delta_{\rk_0}'\in\C^{\times}$ of the discriminant of $\rk_0$. For each $\rho\in\CE_{\RE}$, define
 \begin{equation}\label{defomega}
 \Omega({^{\rho}\chi}):=(\delta_{\rk_0}')^{n-1}\cdot\prod_{\iota\in\CE_{\rk}^-(^\rho\check{\chi})}\delta(\RK/\rk,\iota),
 \end{equation}
 and
 \begin{equation}\label{defn}
 \mathbf{n}(\sigma,{^{\rho}\chi}):=(n-1)\cdot p_0(\sigma)+\sum_{\iota\in\CE_{\rk}^-(^\rho\check{\chi})}p(\sigma,\iota).
 \end{equation}
 Note that for every $\sigma\in\mathrm{Aut}(\C)$, we have that
	\[
	\frac{\sigma(\delta_{\rk_0}')}{\delta_{\rk_0}'}=(-1)^{p_0(\sigma)},\qquad\text{and}\qquad\frac{\sigma(\delta(\RK/\rk,\iota))}{\delta(\RK/\rk,\sigma
		\circ\iota)}=(-1)^{p(\sigma,\iota)},\ \iota\in\CE_{\rk}.
	\]
    Therefore,
    \begin{equation}\label{omegasign}
    \frac{\sigma(\Omega({^{\rho}\chi}))}{\Omega(^{\sigma\circ}{^{\rho}\chi})}=(-1)^{\mathbf{n}(\sigma, {^\rho}\chi)}.
    \end{equation}
    
    Then we have the following proposition due to \cite[Chap. II, (1.8.11)]{Sc88} and \cite[(12)]{HS85}. 	
    \begin{prp}
		\label{prp:period} We have that
		\begin{equation}
			\label{periodrelation}
			\frac{\mathrm{c}^+({\chi})}{\mathrm{c}^+({\check{\chi}})\cdot \Omega(\chi)}\in \RE^\times\subset (\RE\otimes_\BQ \C)^\times, 
		\end{equation}
        where $\Omega(\chi):=\{\Omega(^{\rho}\chi)\}_{\rho\in\CE_\RE}\in (\RE\otimes_\BQ \C)^\times$.
	Equivalently, 
    \[
\sigma\left(\frac{\mathrm{c}^+(^\rho\chi)}{\mathrm{c}^+(^\rho\check{\chi})\cdot\Omega(^\rho\chi)}\right)=\frac{\mathrm{c}^+(^{\sigma\circ\rho}\chi)}{\mathrm{c}^+(^{\sigma\circ\rho}\check{\chi})\cdot\Omega(^{\sigma\circ\rho}\chi)}
	\]
    for all $\sigma\in\mathrm{Aut}(\C)$ and $\rho\in \CE_\RE$.
	\end{prp}

By the above proposition and the fact that
    \[
    \frac{\BL_{\infty}(0,\chi)}{\BL_{\infty}(0,\check{\chi})}\in\BQ^{\times}\subset\RE^{\times}\subset(\RE\otimes_{\BQ}\C)^{\times},
    \]
   Theorem \ref{mainthm} is implied by the following.

    \begin{thmp}\label{mainthm'}
    Let $\rho\in\CE_{\RE}$. For every $\sigma\in\mathrm{Aut}(\C)$, we have that
    	\begin{equation}\label{sigmaL}
		\sigma\left(\frac{\RL(0,{^{\rho}\chi})}{\RL(0,{^{\rho}\check{\chi}})}\right)=(-1)^{\mathbf{n}(\sigma,^{\rho}\chi)}\cdot\frac{\RL(0,{^{\sigma\circ}{^{\rho}\chi}})}{\RL(0,{^{\sigma\circ}{^{\rho}\check{\chi}}})}
	\end{equation}
   and, as a consequence,
  \begin{equation}\label{5.17}
      \sigma\left(\frac{1}{\Omega(^{\rho}\chi)}\cdot\frac{\RL(0,{^{\rho}\chi})}{\RL(0,{^{\rho}\check{\chi}})}\right)=\frac{1}{\Omega(^{\sigma\circ\rho}\chi)}\cdot\frac{\RL(0,{^{\sigma\circ}{^{\rho}\chi}})}{\RL(0,{^{\sigma\circ}{^{\rho}\check{\chi}}})}.
  \end{equation}
    \end{thmp}

In view of the following Lemma \ref{funeq0}, which is essentially due to the functional equation, we assume that
\begin{equation}\label{fe}
^\rho\chi_{\tau}+{^\rho\chi}_{\overline{\tau}}\leq -1\quad\text{for all  $\rho\in\CE_{\RE}$ and $\tau\in\CE_{\RK}$ }
\end{equation}
in the rest of the paper.

\begin{lemp}\label{funeq0}
    If Theorem \ref{mainthm'} holds under the assumption \eqref{fe}, then it holds in general. 
\end{lemp}
\begin{proof}
    Write $|\cdot|_{\RK,\RE}: \BA_\RK^\times\rightarrow  \RE^\times$ for the algebraic Hecke character of $\RK$ with values in $\RE$ such that $^\rho(|\cdot|_{\RK,\RE})=|\cdot|_{\RK}$ for all $\rho\in \CE_{\RE}$. 
    Note that one of $\chi$ and $\chi^{-1}|\cdot|_{\RK,\RE}$ satisfies \eqref{fe}. Suppose that $\chi^{-1}|\cdot|_{\RK,\RE}$ satisfies \eqref{fe}. Then, by the assumption of the lemma, we have that
    \begin{equation}\label{5A}
    \sigma\left(\frac{\RL(0,{^{\rho}\chi^{-1}}|\cdot|_{\RK})}{\RL(0,{^{\rho}\check{\chi}^{-1}|\cdot|_{\rk}^n})}\right)=(-1)^{\mathbf{n}(\sigma,^{\rho}\chi^{-1}|\cdot|_{\RK})}\cdot\frac{\RL(0,{^{\sigma\circ}{^{\rho}\chi^{-1}|\cdot|_{\RK}}})}{\RL(0,{^{\sigma\circ}{^{\rho}\check{\chi}}}^{-1}|\cdot|_{\rk}^n)}
    \end{equation}
    holds for every $\sigma\in\mathrm{Aut}(\C)$ and $\rho\in\CE_{\RE}$. We need to prove \eqref{sigmaL} for $\chi$.

    Fix the additive character $\psi_{\RK}$ as the composition 
    \[
\psi_{\RK}:\RK\backslash\A_{\RK}\xrightarrow{\mathrm{Tr}_{\RK/\BQ}}\BQ\backslash\A_{\BQ}\to\BQ\backslash\A_{\BQ}/\widehat{\Z}=\R/\Z\xrightarrow{x\mapsto e^{2\pi\mathrm{i}x}}\C^{\times},
    \]
    where $\mathrm{Tr}_{\RK/\BQ}$ is the trace map, $\widehat{\Z}$ is the profinite completion of $\Z$, and $\mathrm i:=\sqrt{-1}\in \C$. Fix the additive character $\psi_{\rk}:\rk\backslash\A_{\rk}\to\C^{\times}$ similarly. Let $\rho\in\CE_{\RE}$. Write $\psi_{\RK}=\otimes_{w\in\Sigma_{\RK}}\psi_w$ and $^\rho\chi=\otimes_{w\in\Sigma_{\RK}}{^\rho\chi_w}$. For every finite place $w\in \Sigma_\RK$, denote by $\mathfrak{c}(\psi_{w})$ and $\mathfrak{c}(^\rho\chi_w)$ the conductors of $\psi_w$ and $^\rho\chi_w$ respectively, and take $y_w\in \RK_w^\times$ such that $\mathfrak{c}(\psi_{w}) = y_w \cdot \mathfrak{c}(^\rho\chi_w)$. 
Denote by
    \[
    \CG(^\rho\chi):=\prod_{\substack{w\in\Sigma_{\RK}\\w\nmid\infty}}\int_{\CO_w^{\times}}{^\rho\chi_w}(y_wx_w)^{-1}\cdot\psi_w(y_wx_w)\mathrm{d}x_w
    \]
    the Gauss sum associated to $^\rho\chi$ defined as in \cite[(8.3)]{LS25} with respect to the additive character $\psi_{\RK}$. Here $\CO_w$ is the ring of integers of $\RK_w$ and $\mathrm{d}x_w$ is the normalized Haar measure such that $\CO_w^{\times}$ has total volume $1$. Note that this definition of Gauss sum is independent of the choices of $y_{w}$. Similarly, we denote by $\CG(^\rho\check\chi)$ the Gauss sum associated to $^\rho\check{\chi}$ with respect to the additive character $\psi_{\rk}$. Recall the functional equations of Hecke L-functions (\cite{Ku03}, \cite{Tate}):
\[
\begin{aligned}
\RL(0,{^\rho\chi})&=\mathrm{i}^{\sum_{\tau\in\CE_{\RK}^-(^\rho\chi)}\left({^\rho\chi_{\tau}}-{^\rho\chi_{\overline{\tau}}}\right)}\cdot\mathcal{G}(^\rho\chi)\cdot|\delta_{\RK}|^{\frac{1}{2}}\cdot\RL(0,{^\rho\chi^{-1}}|\cdot|_{\RK}),\\
    \RL(0,{^\rho\check\chi})&=\mathrm{i}^{\sum_{\iota\in\CE_{\rk}^-(^\rho\check{\chi})}\left({^\rho\check\chi_{\iota}}-{^\rho\check\chi_{\overline{\iota}}}\right)}\cdot\mathcal{G}(^\rho\check\chi)\cdot|\delta_{\rk}|^{\frac{1}{2}}\cdot\RL(0,{^\rho\check\chi^{-1}}|\cdot|_{\rk}), 
\end{aligned}
\]
where $\delta_{\RK}$ and $\delta_{\rk}$ are the discriminants of $\RK$ and $\rk$ respectively. 

Let $\sigma\in\mathrm{Aut}(\C)$. From \eqref{eq:weight} it is easy to see that
    \[
{\sum_{\tau\in\CE_{\RK}^-(^\rho\chi)}\left({^\rho\chi_{\tau}}-{^\rho\chi_{\overline{\tau}}}\right)}={\sum_{\iota\in\CE_{\rk}^-(^\rho\check{\chi})}\left({^\rho\check\chi_{\iota}}-{^\rho\check\chi_{\overline{\iota}}}\right)}.
    \]
    It is easy to check that (see for example \cite[(8.4)]{LS25})
    \[
    \sigma\left(\frac{\CG(^\rho\chi)}{\CG(^\rho\check\chi)}\right)=\frac{\CG(^{\sigma\circ\rho}\chi)}{\CG(^{\sigma\circ\rho}\check\chi)}.
    \]
    Therefore,
\begin{equation}\label{5B} 
\sigma\left(\frac{\RL(0,{^{\rho}\chi})}{\RL(0,{^{\rho}\check\chi})}\cdot\frac{\RL(0,{^\rho\check\chi^{-1}|\cdot|_{\rk}})}{\RL(0,{^\rho\chi^{-1}|\cdot|_{\RK}})}\cdot\frac{|\delta_{\rk}|^{\frac{1}{2}}}{|\delta_{\RK}|^{\frac{1}{2}}}\right)=\frac{\RL(0,{^{\sigma\circ\rho}\chi})}{\RL(0,{^{\sigma\circ\rho}\check\chi})}\cdot\frac{\RL(0,{^{\sigma\circ\rho}\check\chi^{-1}|\cdot|_{\rk}})}{\RL(0,{^{\sigma\circ\rho}\chi^{-1}|\cdot|_{\RK}})}\cdot\frac{|\delta_{\rk}|^{\frac{1}{2}}}{|\delta_{\RK}|^{\frac{1}{2}}}.
\end{equation}
By a theorem of Harder \cite{Har87} (see also \cite[Theorem 25]{Rag22}), we have
\begin{equation}
    \label{5C}
    \sigma\left(\frac{\RL(0,{^\rho\check\chi}^{-1}|\cdot|_{\rk}^n)}{\RL(0,{^\rho\check\chi}^{-1}|\cdot|_{\rk})}\right)=\frac{\sigma(|\delta_{\rk}|^{\frac{n-1}{2}})}{|\delta_{\rk}|^{\frac{n-1}{2}}}\cdot\frac{\RL(0,{^{\sigma\circ\rho}\check\chi}^{-1}|\cdot|_{\rk}^n)}{\RL(0,{^{\sigma\circ\rho}\check\chi}^{-1}|\cdot|_{\rk})}.
\end{equation}
    Combining \eqref{5A}, \eqref{5B}, \eqref{5C}, and the fact that  (see \cite[Lemma 5.27 and Sublemma 5.35]{Rag25})
        \[
\sigma\left(\frac{|\delta_{\RK}|^{\frac{1}{2}}}{|\delta_{\rk}|^{\frac{n}{2}}}\right)\cdot\frac{|\delta_{\rk}|^{\frac{n}{2}}}{|\delta_{\RK}|^{\frac{1}{2}}}=(-1)^{\sum_{\iota\in\CE_{\rk}}p(\sigma,\iota)}=(-1)^{\mathbf{n}(\sigma,^{\rho}\chi^{-1}|\cdot|_{\RK})}\cdot(-1)^{\mathbf{n}(\sigma,^{\rho}\chi)},
    \]
    we obtain that
    \[
    	\sigma\left(\frac{\RL(0,{^{\rho}\chi})}{\RL(0,{^{\rho}\check{\chi}})}\right)=(-1)^{\mathbf{n}(\sigma,^{\rho}\chi)}\cdot\frac{\RL(0,{^{\sigma\circ}{^{\rho}\chi}})}{\RL(0,{^{\sigma\circ}{^{\rho}\check{\chi}}})},
    \]
    as desired.
\end{proof}

	\section{The toroidal integral}

In this section, we introduce a toroidal integral for Eisenstein series that represents the quotient of L-functions
	\[
	\frac{\RL(s,{^{\rho}\chi})}{\RL(s,{^{\rho}\check{\chi}})}.
	\]

	\subsection{The Eisenstein series}
	
	Denote by $\RG:=\GL_n$ the general linear group of rank $n$ over $\rk$. Let $\RB=\RT\RN$ be the Borel subgroup of $\RG$ consisting of upper triangular matrices, with $\RT$ the diagonal torus and $\RN$ the unipotent radical. Likewise, let $\overline{\RB}=\RT\overline{\RN}$ be the opposite Borel subgroup of lower triangular matrices with unipotent radical $\overline{\RN}$. The center of $\RG$ is denoted by $\RZ$. Let $\RP$ be the standard parabolic subgroup of $\RG$ of type $(n-1,1)$. 

    Fix an embedding  $\rho\in\CE_{\RE}$. Consider the degenerate principal series representation 
	\begin{equation}
		I_{^{\rho}\check\chi}:=\mathrm{Ind}^{\RG(\A_{\rk})}_{\RP(\A_{\rk})}\left(|\det|_{\rk}^{-\frac{1}{2}}\otimes{^{\rho}\check{\chi}}^{-1}|\cdot|_{\rk}^{\frac{n-1}{2}}\right)\quad \textrm{(normalized smooth induction)}
	\end{equation}
	where $\det$ indicates the determinant character of $\GL_{n-1}$. We have the usual factorizations
	\[
	\begin{aligned}
		{^{\rho}\chi}=\otimes_{w\in\Sigma_{\RK}}{^{\rho}\chi}_w\qquad\textrm{and}\qquad {^{\rho}\check{\chi}}=\otimes_{v\in\Sigma_{\rk}}{^{\rho}\check{\chi}}_v.
	\end{aligned}
	\]
	Then  ${^{\rho}\check{\chi}}_v=\prod_{w|v}{^{\rho}\chi}_w|_{\rk_v^{\times}}$. Put
	\[
	{^{\rho}\chi}_{\mathrm{f}}:=\otimes_{\substack{w\in\Sigma_{\RK}\\w\nmid\infty}}{^{\rho}\chi}_w,\quad{^{\rho}\chi}_{\infty}:=\otimes_{\substack{w\in\Sigma_{\RK}\\w|\infty}}{^{\rho}\chi}_w,\quad{^{\rho}\check{\chi}}_{\mathrm{f}}:=\otimes_{\substack{v\in\Sigma_{\rk}\\v\nmid\infty}}{^{\rho}\check{\chi}}_v,\quad{^{\rho}\check{\chi}}_{\infty}:=\otimes_{\substack{v\in\Sigma_{\rk}\\v|\infty}}{^{\rho}\check{\chi}}_v.
	\]
For the representation $I_{^\rho\check\chi}$, we have factorizations
	\[
	I_{^{\rho}\check\chi}=\widehat{\otimes}_{v\in\Sigma_{\rk}}'I_{{^{\rho}\check\chi}_v}=I_{{^{\rho}\check\chi}_{\infty}}\otimes I_{{^{\rho}\check\chi}_{\mathrm{f}}},\qquad I_{{^{\rho}\check\chi}_{\infty}}:=\widehat{\otimes}_{v|\infty}I_{{^{\rho}\check\chi}_v},\  I_{{^{\rho}\check\chi}_{\mathrm{f}}}:=\otimes'_{v\nmid\infty}I_{{^{\rho}\check\chi}_v},
	\]
	where
	\[
I_{{^{\rho}\check\chi}_v}:=\mathrm{Ind}^{\RG(\rk_v)}_{\RP(\rk_v)}\left(|\det|_{v}^{-\frac{1}{2}}\otimes{^{\rho}\check{\chi}}_v^{-1}|\cdot|_{v}^{\frac{n-1}{2}}\right). 
	\]

	For each archimedean place $v$ of $\rk$, write $K_v:=\RU(n)$ (the compact unitary group), to be viewed as a maximal compact subgroup of $\RG(\rk_v)$. Set
	\[
	K_{\infty}:=\prod_{\substack{v\in\Sigma_{\rk}\\v|\infty}}K_v,\qquad K:=K_{\infty}\times\GL_n(\widehat{\CO}_{\rk}),
	\]
	where $\widehat{\CO}_{\rk}$ is the profinite completion of $\CO_\rk$, and $\CO_{\rk}$ is the ring of integers of $\rk$. 
    In the rest of this paper, let $s$ denote a complex variable. For every $\varphi\in I_{^{\rho}\check\chi}$, set
	\[
	\varphi_s(g):=|\det(p)|_{\rk}^s\cdot|p_{n,n}|_{\rk}^{-ns}\varphi(g),
	\]
	where $g=pk\in\RG(\A_{\rk})$ with $p=[p_{i,j}]_{1\leq i,j\leq n}\in \RP(\A_{\rk})$ and $k\in K$. We form the Eisenstein series
	\begin{equation}
		\RE(g;\varphi_s)=\sum_{\gamma\in\RP(\rk)\backslash\RG(\rk)}\varphi_s(\gamma g).
	\end{equation}
	The series converges absolutely for $\Re(s)$ sufficiently large and has a meromorphic continuation to the whole complex plane in $s$. 

The assumption \eqref{fe} implies that  $^{\rho}\check{\chi}$ is the product of a unitary character with a character of the form $|\cdot|_{\rk}^{\frac{cn}{2}}$, where $c$ is a positive integer. Thus $\RL(0,{^{\rho}\check{\chi}})\neq 0$ (see \cite[Lemma VII. 13.3]{Neu}) and $0$ is not a pole of $\RL(s,{^\rho\check{\chi}_v})$ for every $v\in \Sigma_\rk$. 
By \cite[Proposition 2.2]{JLLS}, the Eisenstein series $\RE(g;\varphi_s)$ is holomorphic at $s=0$ for every $\varphi\in I_{^{\rho}\check\chi}$, and we have a continuous linear embedding
	\begin{equation}\label{eisensteinseries}
I_{^{\rho}\check\chi}\to\CA(\RG(\rk)\backslash\RG(\A)),\qquad\varphi\mapsto\RE(\,\cdot\,;\varphi_s)|_{s=0}.
	\end{equation}
	Here $\CA(\RG(\rk)\backslash\RG(\A))$ denotes the space of smooth automorphic forms on $\RG(\rk)\backslash\RG(\A)$.

		\subsection{The global integral}\label{sec:global}
	
	Let $\RH :=\Res_{\RK/\rk}\GL_1$ so that $\RH(\rk)=\RK^{\times}$, where $\Res$ denotes the Weil restriction of scalars. Set  $\RH':=\RZ\backslash \RH$. Then by Hilbert's Theorem 90, the natural map $\RH(\rk)\rightarrow \RH'(\rk)$ is surjective, and hence $\RH'(\rk)=\rk^{\times}\backslash\RK^{\times}$ (we will freely use the surjectivity of other maps induced by the quotient homomorphism $\RH\rightarrow \RH'$, which are all implied by Hilbert's Theorem 90). 
    Our toroidal integral for Eisenstein series will be taken along $\RH'(\A_{\rk})=\A_{\rk}^{\times}\backslash\A_{\RK}^{\times}$.
	
	Fix a $\rk$-linear isomorphism 
	\begin{equation}\label{i}
		\RK\xrightarrow{\sim}\rk^{1\times n}
	\end{equation}
    such that $1\in \RK$ is mapped to $e_n:=[0 \ \cdots \ 0 \ 1]\in \rk^{1\times n}$. 
     Here and henceforth, the superscript `$\,^{1\times n}$' indicates the space of $1\times n$ matrices, on which $\RG(\rk)$ acts from the right.
    This induces an embedding 
    \[
    \RH\hookrightarrow \RG
    \]
    via multiplication by elements of $\RK$. We view $\RH$ as a subgroup of $\RG$ via this embedding. Then  there is a natural bijection 
    \be \label{bij}
    \RH'(\rk)=\RZ(\rk)\backslash\RH(\rk)\to\RP(\rk)\backslash \RG(\rk).
    \ee
    For all but finitely many non-archimedean places $v$ of $\rk$, the $\rk_v$-linear isomorphism 
    \[
    \RK_v :=\RK\otimes_\rk\rk_v =\prod_{w\mid v}\RK_w\xrightarrow{\sim} \rk_v^{1\times n}
    \]
    induced by \eqref{i} restricts to an isomorphism
    \be \label{iO}
    \prod_{w\mid v}\CO_w \xrightarrow{\sim} \CO_v^{1\times n},
    \ee
    where $\CO_w$ and $\CO_v$ denote the ring of integers of $\RK_w$ and $\rk_v$, respectively.

For every place $v\in\Sigma_{\rk}$, denote by $\mathfrak{M}_v$ the space of all left-invariant $\C$-valued Borel measures on $\RH'(\rk_v)=\rk_v^{\times}\backslash\RK_v^{\times}$. When $v$ is non-archimedean, it has a distinguished element with respect to which the maximal open compact subgroup has total volume $1$. Denote by $\mathfrak{M}$ the space of all left-invariant $\C$-valued Borel measures on $\RH'(\A_{\rk})$. Write
	\[
	\mathfrak{M}=\otimes_{v\in\Sigma_{\rk}}'\mathfrak{M}_v=\mathfrak{M}_{\infty}\otimes\mathfrak{M}_{\mathrm{f}},\qquad\mathfrak{M}_{\mathrm{f}}:=\otimes_{v\nmid\infty}'\mathfrak{M}_v,\quad  \mathfrak{M}_{\infty}:=\otimes_{v|\infty}\mathfrak{M}_v,
	\]
	where the restricted tensor product is defined with respect to those distinguished elements of $\mathfrak{M}_v$ ($v\nmid\infty$). For each $\mathrm{d}x\in\mathfrak{M}$, we write $\overline{\mathrm{d}}x$ for its quotient measure on $\RH'(\rk)\backslash\RH'(\A_{\rk})$ with respect to the counting measure on $\RH'(\rk)$.
	
	For $\varphi\in I_{^{\rho}\check\chi}$ and $\mathrm{d}x\in\mathfrak{M}$, we consider the global integral
	\begin{equation}\label{globalI}
		\RZ(s,\varphi;{^{\rho}\chi},\mathrm{d}x):=\int_{\RH'(\rk)\backslash\RH'(\A_{\rk})}{^{\rho}\chi}(x)\RE(x;\varphi_s)\overline{\mathrm{d}}x.
	\end{equation}
	When $\mathrm{Re}(s)$ is sufficiently large so  that the Eisenstein series $\RE(x;\varphi_s)$ converges absolutely, we unfold the Eisenstein series to obtain
	\begin{eqnarray*}
	    \RZ(s,\varphi;{^{\rho}\chi},\mathrm{d}x)&=&\int_{\RH'(\rk)\backslash\RH'(\A_{\rk})}{^{\rho}\chi}(x)\sum_{\eta\in\RP(\rk)\backslash\RG(\rk)}\varphi_s(\eta x)\overline{\mathrm{d}}x\\
        &=&\int_{\RH'(\A_{\rk})}{^{\rho}\chi}(x)\varphi_s(x)\mathrm{d}x\qquad (\textrm{by \eqref{bij}}).
	\end{eqnarray*}
	For $v\in\Sigma_{\rk}$, write ${^{\rho}\chi}_v:=\otimes_{w|v}{^{\rho}\chi}_w$, which is  a character of $\RH(\rk_v)$. Then if $\varphi=\otimes_v\varphi_v$ and $\mathrm{d}x=\otimes_v\mathrm{d}x_v$ for $\varphi_v\in I_{{^{\rho}\check\chi}_v}$ and $\mathrm{d}x_v\in\mathfrak{M}_v$, we obtain the factorization
	\[
	\RZ(s,\varphi;{^{\rho}\chi},\mathrm{d}x)=\prod_v\RZ_v(s,\varphi_v;{^{\rho}\chi}_v,\mathrm{d}x_v),
	\]
	where the local integrals are defined by
	\begin{equation}
		\label{local}
		\RZ_v(s,\varphi_v;{^{\rho}\chi}_v,\mathrm{d}x_v):=\int_{\RH'(\rk_v)}{^{\rho}\chi}_v(x_v)\varphi_{s,v}(x_v)\mathrm{d}x_v.
	\end{equation}

	\subsection{The local integral}
	
Let $v\in\Sigma_{\rk}$ be a place of $\rk$. Let $\CS(\rk_v^{1\times n})$ be the space of Schwartz functions on $\rk_v^{1\times n}$, with the action of $\rk_v^{\times}\times\RG(\rk_v)$ given by
	\[
	(a,g).\Phi(x)=\Phi(a^{-1}xg)
	\]
	for $a\in\rk_v^{\times}$, $g\in\RG(\rk_v)$, $x\in\rk_v^{1\times n}$ and $\Phi\in\CS(\rk_v^{1\times n})$. Denote by
	\[
	\Theta_n({^{\rho}\check{\chi}}_v):=(\CS(\rk_v^{1\times n})\otimes{^{\rho}\check{\chi}}_v^{-1})_{\rk_v^{\times}}
	\]
	the 
    (Hausdorff when $v$ is archimedean) coinvariant space. Define a homomorphism
    \begin{equation}\label{f}
			\begin{array}{rcl}
          \mathrm{f}_{{^{\rho}\chi}_v}:  \CS(\rk_v^{1\times n})&\rightarrow &I_{{^{\rho}\check\chi}_v},\\
            \Phi&\mapsto &\left(g \mapsto \left(\left.\frac{|\det g|_v^s}{\RL(ns,{^{\rho}\check{\chi}}_v)}\int_{\rk_v^{\times}}\Phi(ae_n g){^{\rho}\check{\chi}}_v(a)|a|_{v}^{ns}\mathrm{d}_va\right)\right|_{s=0}\right),
            \end{array}
		\end{equation}
        where $\mathrm{d}_va$ is a fixed Haar measure on $\rk_v^{\times}$. Note that $0$ is not a pole of $\RL(s,{^{\rho}\check{\chi}}_v)$  due to \eqref{fe}. Thus we have the following result by \cite[Theorem 1.1]{X18} (see also \cite[Proposition 2.1]{JLS}).
	
	\begin{lemp}\label{schwartzsection}
The homomorphism $\mathrm{f}_{{^{\rho}\chi}_v}$ induces an isomorphism
\[
\Theta_n({^{\rho}\check{\chi}}_v)\xrightarrow{\sim} I_{{^{\rho}\check\chi}_v}.
\]
	\end{lemp}

    For $\Phi\in\CS(\rk_v^{1\times n})$ and a left-invariant $\C$-valued Borel measure $\mathrm{d}y$ on $\RH(\rk_v)$, we define another local integral
	\begin{equation}
		\label{theta}
		\RZ_v^{\Theta}(s,\Phi;{^{\rho}\chi}_v,\mathrm{d}y):=\int_{\RH(\rk_v)}{^{\rho}\chi}_v(y)|\det y|_v^s\Phi(e_n y)\mathrm{d}y.
	\end{equation}
	It follows from \eqref{f} that
	\begin{equation}\label{theta=I}
		\RZ_v(s,\mathrm{f}_{{^{\rho}\chi}_v}(\Phi);{^{\rho}\chi}_v,\mathrm{d}y/\mathrm{d}_va)=\frac{1}{\RL(ns,{^{\rho}\check{\chi}}_v)}	\cdot\RZ_v^{\Theta}(s,\Phi;{^{\rho}\chi}_v,\mathrm{d}y),
	\end{equation}
    where $\mathrm{d}y/\mathrm{d}_va\in\mathfrak{M}_v$ indicates the quotient measure of $\mathrm{d}y$ by $\mathrm{d}_va$, and $\mathrm{d}_va$ is as in \eqref{f}.

	Define the normalized local integral
	\begin{equation}
\label{normalizedlocal}
		\RZ_v^{\circ}(s,\varphi_v;{^{\rho}\chi}_v,\mathrm{d}x):=\frac{\RL(ns,{^{\rho}\check{\chi}}_v)}{\RL(s,{^{\rho}\chi}_v)}\cdot\RZ_v(s,\varphi_v;{^{\rho}\chi}_v,\mathrm{d}x)
	\end{equation}
	for $\varphi_v\in I_{{^{\rho}\check\chi}_v}$ and $\mathrm{d}x\in\mathfrak{M}_v$, where
    \[
   \RL(s,{^{\rho}\chi}_v):=\prod_{w\mid v} \RL(s,{^{\rho}\chi}_w).
    \]
    In view of \eqref{theta=I} and Tate's thesis \cite{Tate}, \eqref{normalizedlocal} is holomorphic as a function of $s\in \C$, and its evaluations at $s=0$ yield a nonzero linear functional
	\[
	\RZ_{{^{\rho}\chi}_v}^{\circ}\in\mathrm{Hom}_{\RH(\rk_v)}\left(I_{{^{\rho}\check\chi}_v}\otimes{^{\rho}\chi}_v\otimes\mathfrak{M}_v,\C\right).
	\]
	Put
	\[
	\RZ_{{^{\rho}\chi}_{\mathrm{f}}}^{\circ}:=\bigotimes_{v\nmid\infty}\RZ_{{^{\rho}\chi}_v}^{\circ}\qquad\text{and}\qquad\RZ_{{^{\rho}\chi}_{\infty}}^{\circ}:=\bigotimes_{v|\infty}\RZ_{{^{\rho}\chi}_{v}}^{\circ}.
	\]
	
	Denote by $1_n$ the $n\times n$ identity matrix. We end this section by justifying the unramified computation.
	
	\begin{lemp}\label{lem:unramified}
		Let $v$ be a non-archimedean place of $\rk$ that is unramified in $\RK$. Assume that \eqref{iO} holds and that ${^{\rho}\chi}_v$ is unramified (or equivalently ${^{\rho}\chi}_w$ is unramified for every $w|v$). Let $\varphi_v^{\circ}\in I_{{^{\rho}\check\chi}_v}$ be the spherical vector such that $\varphi_v^{\circ}(1_n)=1$, and let $\mathrm{d}^{\circ}x$ be the distinguished element of $\mathfrak{M}_v$. Then 
		\begin{equation}\label{unramified}
			\RZ^{\circ}_v(s,\varphi^{\circ}_v;{^{\rho}\chi}_v,\mathrm{d}^{\circ}x)=1.
		\end{equation}
	\end{lemp}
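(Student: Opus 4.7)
The plan is to realize $\varphi_v^\circ$ as the image under $\mathrm{f}_{{^\rho\chi}_v}$ of a canonical Schwartz function, use the identity \eqref{theta=I} to convert $\RZ_v$ into a Tate-type integral $\RZ_v^\Theta$, and then factor everything along the places $w\mid v$ via the isomorphism \eqref{iO}. The standard unramified Tate computation at each $w$ supplies exactly the L-factor $\RL(s,{^\rho\chi}_w)$, whose product over $w\mid v$ is $\RL(s,{^\rho\chi}_v)$, and this cancels with the normalization \eqref{normalizedlocal}.

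First, I would let $\Phi^\circ\in\CS(\rk_v^{1\times n})$ be the characteristic function of $\CO_v^{1\times n}$. A short computation at $g=1_n$ using $\Phi^\circ(ae_n)=\mathbb{1}_{\CO_v}(a)$ and the standard unramified Tate integral shows $\mathrm{f}_{{^\rho\chi}_v}(\Phi^\circ)(1_n)=1$; since $\Phi^\circ$ is $\GL_n(\CO_v)$-invariant, $\mathrm{f}_{{^\rho\chi}_v}(\Phi^\circ)$ is $K_v$-spherical, hence equals $\varphi_v^\circ$ by Lemma \ref{schwartzsection}. I normalize the Haar measure $\mathrm{d}_va$ on $\rk_v^\times$ so that $\mathrm{vol}(\CO_v^\times)=1$.

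Next, for the measure $\mathrm{d}y=\prod_{w\mid v}\mathrm{d}y_w$ on $\RH(\rk_v)=\prod_{w\mid v}\RK_w^\times$ with $\mathrm{vol}(\CO_w^\times,\mathrm{d}y_w)=1$, I would compute $\RZ_v^\Theta(s,\Phi^\circ;{^\rho\chi}_v,\mathrm{d}y)$. Using \eqref{i}, the embedding $\RH\hookrightarrow\RG$ sends $y\in\RK_v^\times$ to multiplication by $y$, so $e_ny$ corresponds via \eqref{iO} to $y$ itself in $\prod_{w\mid v}\CO_w$; similarly $|\det y|_v=\prod_{w\mid v}|y_w|_w$ by the usual compatibility of normalized absolute values with the norm $N_{\RK_w/\rk_v}$. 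The integral therefore factors as
\[
\RZ_v^\Theta(s,\Phi^\circ;{^\rho\chi}_v,\mathrm{d}y)=\prod_{w\mid v}\int_{\RK_w^\times}{^\rho\chi}_w(y_w)|y_w|_w^s\,\mathbb{1}_{\CO_w}(y_w)\,\mathrm{d}y_w=\prod_{w\mid v}\RL(s,{^\rho\chi}_w)=\RL(s,{^\rho\chi}_v),
\]
where the last equality invokes unramifiedness of each ${^\rho\chi}_w$ together with the volume normalization $\mathrm{vol}(\CO_w^\times,\mathrm{d}y_w)=1$.

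Finally, I would verify that the quotient measure $\mathrm{d}y/\mathrm{d}_va$ on $\RH'(\rk_v)$ is precisely the distinguished measure $\mathrm{d}^\circ x$: the image of $\prod_{w\mid v}\CO_w^\times$ in $\rk_v^\times\backslash\RK_v^\times$ is the maximal compact subgroup, and under our normalization it has measure $\prod_{w\mid v}\mathrm{vol}(\CO_w^\times)/\mathrm{vol}(\CO_v^\times)=1$. Substituting into \eqref{theta=I} gives
\[
\RZ_v(s,\varphi_v^\circ;{^\rho\chi}_v,\mathrm{d}^\circ x)=\frac{\RL(s,{^\rho\chi}_v)}{\RL(s,{^\rho\check\chi}_v)},
\]
so by the definition \eqref{normalizedlocal} of $\RZ_v^\circ$, the desired identity $\RZ_v^\circ(s,\varphi_v^\circ;{^\rho\chi}_v,\mathrm{d}^\circ x)=1$ follows. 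There is no serious obstacle here; the only care required is the measure bookkeeping, especially checking that the distinguished element $\mathrm{d}^\circ x$ on $\RH'(\rk_v)$ is compatible with product measures on $\prod_{w\mid v}\CO_w^\times$ normalized by $\CO_v^\times$, which is where the assumption \eqref{iO} enters.
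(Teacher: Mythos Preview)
Your proof is correct and follows essentially the same approach as the paper's: both choose $\Phi^\circ=\mathbb{1}_{\CO_v^{1\times n}}$ with $\mathrm{d}_va$ normalized so that $\mathrm{vol}(\CO_v^\times)=1$, identify $\mathrm{f}_{{^\rho\chi}_v}(\Phi^\circ)=\varphi_v^\circ$, factor $\RZ_v^\Theta$ over $w\mid v$ via \eqref{iO} to obtain $\RL(s,{^\rho\chi}_v)$, and check $\mathrm{d}y/\mathrm{d}_va=\mathrm{d}^\circ x$ before invoking \eqref{theta=I} and \eqref{normalizedlocal}. Your write-up is in fact more detailed than the paper's (you justify sphericality, the identification $e_ny\leftrightarrow y$, and $|\det y|_v=\prod_{w\mid v}|y_w|_w$ explicitly), but the structure is identical.
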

	
	\begin{proof}
Take $\mathrm{d}_va$ in \eqref{f} to be the Haar measure on $\rk_v^{\times}$ such that $\CO_v^{\times}$ has total volume $1$. Then $\mathrm{f}_{{^{\rho}\chi}_v}(\Phi^{\circ})=\varphi^{\circ}_v$ for $\Phi^{\circ}$ the characteristic function of $\CO_v^{1\times n}$. Define a measure  $\mathrm{d}^{\circ}y:=\prod_{w\mid v}\mathrm{d}^{\circ}y_w$ on $\RK_v^{\times}=\prod_{w|v}\RK_w^{\times}$, where $\od\!^\circ y_w$ is the Haar measure on $\RK_w^\times$ with respect to which $\CO_w^{\times}$ has total volume $1$. Then by \eqref{iO}, we have 
		\[
		\begin{aligned}
			\RZ_v^{\Theta}(s,\Phi^{\circ};{^{\rho}\chi}_v,\mathrm{d}^{\circ}y)&=\prod_{w\mid v}\int_{\CO_{w}\setminus\{0\}}\chi_{w}(y_w)|y_w|_{w}^s\mathrm{d}^{\circ}y_w\\
			&=\RL(s,{^{\rho}\chi}_v),
		\end{aligned}
		\]
		since ${^{\rho}\chi}_v$ is unramified. The condition that $v$ is unramified in $\RK$ implies that $ (\prod_{w\mid v}\CO_w^\times)/\CO_v^\times$  equals the maximal open compact subgroup of $\RK_v^\times /\rk_v^\times$, which further implies that $\mathrm{d}^{\circ}y/\mathrm{d}_va=\mathrm{d}^{\circ}x$. The lemma then follows from \eqref{theta=I} and \eqref{normalizedlocal}.
	\end{proof}

	\section{The archimedean modular symbols}
	
	We define and calculate the archimedean modular symbols associated to the archimedean toroidal integrals in this section.
	
	\subsection{Lie algebras, measures, and orientations}

Let $v$ be an archimedean place of $\rk$. Then $\RK_w=\rk_v\cong \C$ for all $w\mid v$. 
The isomorphism \eqref{i} extends to a $\rk_v$-linear isomorphism
\begin{equation}\label{iC}
		\mathfrak j_v: \RK_v :=\RK\otimes_\rk\rk_v =\prod_{w\mid v}\RK_w\xrightarrow{\sim} \rk_v^{1\times n}.
	\end{equation}
We fix a total order on the set $\{w\in\Sigma_{\RK}:w|v\}$, whose elements are enumerated as
\[
w_1\prec w_2\prec\dots\prec w_n,
\]
such that the natural maps 
\[
\CE_{\RK}(\iota)\to\{w\in\Sigma_{\RK}:w|v\}
\]
are order-preserving for both $\iota\in\CE_{\rk}$ that induce $v$. For each $1\leq i\leq n$, write $1_{w_i}\in \RK_{w_i}$ for the identity element, which is naturally viewed as an element of $\RK_v$. Set
\[
\gamma_v:=\begin{bmatrix}  \mathfrak j_v(1_{w_1}) \\ 
\mathfrak j_v(1_{w_2)}\\
\vdots\\
\mathfrak j_v(1_{w_n})\end{bmatrix}\in \RG(\rk_v),
\]
and put
\be \label{gammainf}
\gamma_\infty:=\{\gamma_v\}_{v\mid\infty} \in \RG(\rk_\infty).
\ee   
 Let $K_\infty^\RH$ be the maximal compact subgroup  of  $\RH(\rk_{\infty})$. Then  
 \[
 K_\infty^\RH\subset K_\infty':= \gamma_\infty^{-1} K_\infty \gamma_\infty.
 \]

Put $\widetilde{K}_{\infty}:=K_{\infty}\RZ(\rk_{\infty})$, $\widetilde{K}'_{\infty}:=K_{\infty}'\RZ(\rk_{\infty})$, and $\widetilde{K}_{\infty}^{\RH}:=K_{\infty}^{\RH}\RZ(\rk_{\infty})$. Denote by $\mathfrak{g}_{\infty}$, $\widetilde{\mathfrak{k}}_{\infty}$, $\widetilde{\mathfrak{k}}'_{\infty}$, $\mathfrak{h}_{\infty}$, and $\widetilde{\mathfrak{k}}^{\RH}_{\infty}$ the complexified Lie algebras of $\RG(\rk_{\infty})$, $\widetilde{K}_{\infty}$, $\widetilde{K}'_{\infty}$, $\RH(\rk_{\infty})$, and $\widetilde{K}_{\infty}^{\RH}$ respectively.  
Write
\[
\mathfrak{g}_{\infty}=\widetilde{\mathfrak{k}}_{\infty}\oplus \mathfrak{p}_{\infty}\quad \textrm{and}\quad \mathfrak{h}_{\infty}=\widetilde{\mathfrak{k}}^{\RH}_{\infty}\oplus \mathfrak{p}^\RH_{\infty},
\]
where $\mathfrak{p}_{\infty}$ is the orthogonal complement of $[\mathfrak{g}_{\infty},\mathfrak{g}_{\infty}]\cap \widetilde{\mathfrak{k}}_{\infty}$ in $[\mathfrak{g}_{\infty},\mathfrak{g}_{\infty}]$, and $\mathfrak{p}^\RH_{\infty}$ is the orthogonal complement of $[\mathfrak{g}_{\infty},\mathfrak{g}_{\infty}]\cap \widetilde{\mathfrak{k}}^\RH_{\infty}$ in $[\mathfrak{g}_{\infty},\mathfrak{g}_{\infty}]\cap \mathfrak h_\infty$, both 
with respect to the Killing form of $[\mathfrak{g}_{\infty},\mathfrak{g}_{\infty}]$.

For every $\iota\in\CE_{\rk}$, write $\C_{\iota}:=\C$ viewed as a $\rk$-algebra via $\iota$. Denote by $\mathfrak{g}_{\iota}$ the Lie algebra of the Lie group $\RG(\C_{\iota})$ so that $\mathfrak{g}_{\infty}=\oplus_{\iota\in\CE_{\rk}}\mathfrak{g}_{\iota}$. Clearly,
\[
\{\varepsilon_{i,j}^{\iota}\}_{1\leq i,j\leq n, \  \iota\in\CE_{\rk}}
\]
is a basis of $\mathfrak{g}_{\infty}$, where $\varepsilon_{i,j}^{\iota}$ is the $n\times n$ elementary matrix with $1$ at the $(i,j)$-th entry and $0$ elsewhere, viewed as an element of $\mathfrak{g}_{\iota}$. Then
\be \label{eiota}
  \{e_{i}^{\iota}\}_{1\leq i\leq n-1, \, \iota\in\CE_{\rk}^+({^{\rho}\check{\chi}})}\sqcup \{e_{i,j}^{\iota}\}_{ 1\leq i<j\leq n, \, \iota\in\CE_{\rk}}
\ee
is a basis of $\mathfrak{p}_{\infty}$, where
\[
e_{i}^{\iota}:=\varepsilon_{i,i}^{\iota}+\varepsilon_{i,i}^{\overline{\iota}}-\varepsilon_{n,n}^{\iota}-\varepsilon_{n,n}^{\overline{\iota}}\quad\textrm{and}\quad e_{i,j}^{\iota}:=\varepsilon_{i,j}^{\iota}+\varepsilon_{j,i}^{\overline{\iota}}.
\]
Note that
\[
\{e_{i}^{\iota}\}_{1\leq i\leq n-1, \, \iota\in\CE_{\rk}^+({^{\rho}\check{\chi}})}\textrm{ is a basis of }\Ad_{\gamma_{\infty}}(\mathfrak{p}_{\infty}^{\RH}).
\]
Here and henceforth, `$\Ad$' indicates the adjoint representation.

	
		Denote by $\mathfrak{O}_{\RH}$ the one-dimensional space of $\RH(\rk_{\infty})$-invariant sections of the orientation line bundle of $\RH(\rk_{\infty})/\widetilde{K}^\RH_{\infty}$ with complex coefficients. Note that $\mathfrak{O}_{\RH}$ has a natural $\BQ$-rational structure. Set 
	\[
	d_n:=\dim(\RH(\rk_{\infty})/\widetilde{K}_{\infty}^\RH)=\frac{[\rk:\BQ]}{2}\cdot(n-1).
	\] 
	By push-forward of measures through the proper map $\RH(\rk_{\infty})/\RZ(\rk_\infty)\to\RH(\rk_{\infty})/\widetilde{K}_{\infty}^\RH$, we have an identification
	\begin{equation}\label{MO}
		\mathfrak{M}_{\infty}=(\wedge^{d_n}\mathfrak{p}_{\infty}^{\RH\ast})\otimes\mathfrak{O}_{\RH},
	\end{equation}
	where $\mathfrak{p}_{\infty}^{\RH\ast}$ denotes the dual space of $\mathfrak{p}_{\infty}^{\RH}$.  

    For every local field $\mathbb K$ that is topologically isomorphic to $\C$, we define its Tate measure to be the 
   self-dual  Haar measure  with respect to the additive character \[
   \mathbb K\rightarrow \C^\times, \quad x\mapsto \exp(2\pi{\rm i}\cdot{\rm tr}_{\mathbb K/\BR}(x)),
   \]
   where $\mathrm{tr}$ indicates the field trace. 
   Then  the unit disc in $\mathbb K$ has volume $2\pi$ with respect to the Tate measure.

	For every archimedean place $v$ of $\rk$ and every place $w$ of $\RK$ above $v$, let $\od\!x_{w}$ be the Tate measure on 
    $\RK_{w}$.
Following \cite{Tate}, define a Haar measure $\od\!^\circ x_{w}$ on $\RK_{w}^\times$ by 
    \[
    \od\!^\circ x_{w}:=\zeta_{\RK_{w}}(1) \frac{\od\!x_{w}}{|x_{w}|_{w}},
    \]
    where the meromorphic function $\zeta_{\RK_{w}}(s):=2(2\pi)^{-s}\Gamma(s)$ satisfies $\zeta_{\RK_{w}}(1) ={\pi}^{-1}$. 
    Similarly, we have a Haar measure $\od\!^\circ x_{v}$ on $\rk_{v}^\times$.  
    Put 
    \[
\mathrm{d}_v^{\circ}x:=\left(\prod_{w\mid v} \od\!^\circ x_w\right)/\od\!^\circ x_v\in\mathfrak{M}_v \quad\textrm{and}\quad  \mathrm{d}_{\infty}^{\circ}x:=\prod_{v|\infty}\mathrm{d}_v^{\circ}x\in\mathfrak{M}_{\infty}.
\]

Define an element 
\[
\omega_\infty:=\bigwedge_{\iota\in\CE_{\rk}^+({^{\rho}\check{\chi}})}\Ad_{\gamma_{\infty}^{-1}}(e_1^{\iota\ast}\wedge\cdots\wedge e_{n-1}^{\iota\ast})\in \wedge^{d_n}\mathfrak{p}_{\infty}^{\RH\ast},
\]
where  $\{e_i^{\iota\ast}\}_{1\leq i\leq n-1,\,\iota\in\CE_{\rk}^+(^\rho\check{\chi})}$ is the dual basis of $\mathrm{Ad}_{\gamma_{\infty}}(\mathfrak{p}_{\infty}^{\RH\ast})$ with respect to the basis $\{e_i^{\iota}\}_{1\leq i\leq n-1,\,\iota\in\CE_{\rk}^+(^\rho\check{\chi})}$ of $\mathrm{Ad}_{\gamma_{\infty}}(\mathfrak{p}_{\infty}^{\RH})$. Here and henceforth, the wedge product `$\bigwedge$' is taken with respect to the fixed total order of $\CE_{\rk}$. Note that $\omega_{\infty}$ lies in the natural real form of $\wedge^{d_n}\mathfrak{p}_{\infty}^{\RH^{\ast}}$ and is nonzero, so it determines an orientation $\mathbf{o}_{\RH}$ of the manifold $\RH(\rk_{\infty})/\widetilde{K}^\RH_{\infty}$.

\begin{lemp}\label{measure}
    Under the identification \eqref{MO} we have that 
    \[
    \mathrm{d}_{\infty}^{\circ}x=(4^{n-1}n)^{[\rk\,:\,\BQ]/2}\cdot\omega_{\infty}\otimes\mathbf{o}_{\RH}.
    \]
\end{lemp}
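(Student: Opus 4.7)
The plan is to reduce Lemma \ref{measure} to a local identity at each archimedean place $v$ of $\rk$. Since $\mathrm d_\infty^\circ x=\prod_{v\mid\infty}\mathrm d_v^\circ x$ factors, and since the criticality condition \eqref{CM} forces exactly one of each complex conjugate pair $\{\iota,\bar\iota\}\subset\CE_\rk$ attached to $v$ to lie in $\CE_\rk^+({^\rho\check\chi})$, the wedge $\omega_\infty$ splits as $\bigwedge_{v\mid\infty}\omega_v$ with
\[
\omega_v:=\Ad_{\gamma_v^{-1}}(e_1^{\iota\ast}\wedge\cdots\wedge e_{n-1}^{\iota\ast})\in\wedge^{n-1}\mathfrak p_v^{\RH\ast},
\]
where $\iota\in\CE_\rk^+$ is the embedding attached to $v$. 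It therefore suffices to establish the local identity $\mathrm d_v^\circ x=4^{n-1}\cdot n\cdot\omega_v\otimes\mathbf o_{\RH,v}$ at each archimedean $v$, where $\mathbf o_{\RH,v}$ is the local orientation induced by $\omega_v$.

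For the measure side, I would work in log-polar coordinates: writing $x_w=e^{t_w+\mathrm i\theta_w}$ on $\RK_w^\times\cong\C^\times$, a direct Jacobian computation from Tate's normalization gives $\mathrm d^\circ x_w=\frac{2}{\pi}\mathrm dt_w\,\mathrm d\theta_w$, and likewise for $\rk_v^\times$. Parameterizing $\RH'(\rk_v)=\rk_v^\times\backslash\RK_v^\times$ by the differences $u_i:=t_{w_i}-t_{w_n}$ and $\phi_i:=\theta_{w_i}-\theta_{w_n}$ for $1\le i\le n-1$, dividing $\prod_{w\mid v}\mathrm d^\circ x_w$ by $\mathrm d^\circ x_v$ produces $\mathrm d_v^\circ x=(2/\pi)^{n-1}\prod_i\mathrm du_i\,\mathrm d\phi_i$. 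Pushing this forward along the proper submersion $\RH'(\rk_v)\to\RH(\rk_v)/\widetilde K_v^\RH$, whose compact fiber is $(S^1)^{n-1}$ with total measure $(2\pi)^{n-1}$, collapses the angular coordinates and leaves $4^{n-1}\,\mathrm du_1\cdots\mathrm du_{n-1}$.

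For the form side, the key input is an explicit description of $\Ad_{\gamma_v^{-1}}(e_i^\iota)\in\mathfrak p_v^\RH$. The tangent map of the embedding $\RH(\rk_v)\hookrightarrow\RG(\rk_v)$ induced by $\mathfrak j_v$ sends $y=(y_1,\dots,y_n)\in\RK_v=\mathfrak h_v$ to $\gamma_v^{-1}\diag(y_1,\dots,y_n)\gamma_v$; consequently $\Ad_{\gamma_v^{-1}}(e_i^\iota)=1_{w_i}-1_{w_n}\in\RK_v$. In log-polar coordinates this is the tangent vector $\partial_{t_{w_i}}-\partial_{t_{w_n}}$, and projecting to $\RH(\rk_v)/\widetilde K_v^\RH$ yields $\partial_{u_i}+\sum_{j=1}^{n-1}\partial_{u_j}$. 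The change-of-basis matrix from $\{\Ad_{\gamma_v^{-1}}(e_i^\iota)\}_{i=1}^{n-1}$ to $\{\partial_{u_j}\}_{j=1}^{n-1}$ is thus $I_{n-1}+J_{n-1}$, with $J_{n-1}$ the all-ones matrix; the matrix determinant lemma gives $\det(I_{n-1}+J_{n-1})=1+(n-1)=n$, and dualizing yields $\omega_v=\frac{1}{n}\,\mathrm du_1\wedge\cdots\wedge\mathrm du_{n-1}$.

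Comparing the two computations gives the local identity, with orientations matching by construction; taking a product over the $[\rk:\BQ]/2$ archimedean places then yields the lemma. The main substantive point is the combinatorial factor $n=\det(I_{n-1}+J_{n-1})$: the basis of $\mathfrak p_v^\RH$ natural from the embedding $\RH\hookrightarrow\RG$ (via $\mathfrak j_v$ and $\gamma_v$) is not unimodular with respect to the coordinate basis $\{\partial_{u_i}\}$ used for the push-forward, and tracking this discrepancy through the dualization is what distinguishes the argument from routine measure-theoretic bookkeeping.
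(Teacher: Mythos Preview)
Your proof is correct and follows essentially the same approach as the paper's: both reduce to a local computation at each archimedean place, extract the factor $4^{n-1}$ from polar coordinates together with integration over the compact fiber, and obtain the factor $n$ from the change of basis $\{\varepsilon_i-\varepsilon_n\}_{i<n}$ versus the coordinate basis on $\BR^n/\BR_{\mathrm{diag}}$. The only cosmetic difference is that the paper phrases the factor $n$ as the volume of the parallelepiped spanned by $\{\varepsilon_i-\varepsilon_n\}$ under the quotient measure on $\BR^\perp$ (which is $\sqrt{n}\cdot\sqrt{n}$, the second $\sqrt{n}$ being the Gram-determinant volume), whereas you compute it directly as $\det(I_{n-1}+J_{n-1})$ via the matrix determinant lemma.
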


\begin{proof} 
We endow $\mathbb{R}^n$ and $\mathbb{R}$ with the Lebesgue measure (induced by the standard Euclidean metric), and view $\BR$ as a subspace of $\mathbb{R}^n$ via the diagonal embedding. Consider the orthogonal decomposition 
\[
\mathbb{R}^n = \mathbb{R} \oplus \mathbb{R}^\perp.
\]
Then the quotient measure on $\mathbb{R}^\perp$ is $\sqrt{n}$ times the Lebesgue measure. 

We use the following basis of $\mathbb{R}^\perp$,
\[
\varepsilon_i-\varepsilon_n,\quad i=1,2,\dots, n-1,
\]
where $\{\varepsilon_i\}_{1\leq i\leq n}$ denotes the standard basis of $\BR^n$. The parallelepiped spanned by above basis has volume $\sqrt{n}$ under the Lebesgue measure, hence has volume $n$ under the quotient measure on $\mathbb{R}^\perp$.

If we identify $\rk_v=\BC$, then $\od\!^\circ x_v = \frac{\od\!\theta}{2\pi} \cdot \frac{4\od\!r}{r}$ in terms of the polar coordinates $x_v= re^{\rm i \theta}$. The lemma then follows easily. 
\end{proof}

In view of the above lemma, we put
\begin{equation}\label{o'}
\mathbf{o}_{\RH}':= (4^{n-1}n)^{[\rk\,:\,\BQ]/2}\mathbf{o}_{\RH}\in {\frak O}_\RH,
\end{equation}
which is a $\BQ$-rational element.

	\subsection{Finite-dimensional representations}
	
	Recall that a weight
	\[
	\mu:=\{\mu^{\iota}\}_{\iota\in\CE_{\rk}}:=\{(\mu_1^{\iota},\mu_2^{\iota},\dots,\mu_n^{\iota})\}_{\iota\in\CE_{\rk}}\in(\Z^n)^{\CE_{\rk}}
	\]
	is called dominant if
	\[
	\mu_1^{\iota}\geq\mu_2^{\iota}\geq\cdots\geq\mu_n^{\iota},\qquad\text{for all }\iota\in\CE_{\rk}.
	\]
	For such a dominant weight, we denote by $F_{\mu}$ the (unique up to isomorphism) irreducible holomorphic finite-dimensional representation of $\RG(\rk\otimes_{\BQ}\C)$ of highest weight $\mu$, and denote by $F_{\mu}^{\vee}$ its contragredient. We realize $F_{\mu}$ and its contragredient $F_{\mu}^{\vee}$ as the algebraic inductions
	\begin{equation}\label{algebraic}
		F_{\mu}={^{\mathrm{alg}}\mathrm{Ind}}^{\RG(\rk\otimes_{\BQ}\C)}_{\overline{\RB}(\rk\otimes_{\BQ}\C)}\chi_{\mu},\qquad F^{\vee}_{\mu}={^{\mathrm{alg}}\mathrm{Ind}}^{\RG(\rk\otimes_{\BQ}\C)}_{\RB(\rk\otimes_{\BQ}\C)}\chi_{-\mu},
	\end{equation}
	with a highest weight vector $u_{\mu}\in(F_{\mu})^{\RN(\rk\otimes_{\BQ}\C)}$ and a lowest weight vector $u_{\mu}^{\vee}\in(F_{\mu}^{\vee})^{\overline{\RN}(\rk\otimes_{\BQ}\C)}$ fixed such that $u_{\mu}(1_n)=u_{\mu}^{\vee}(1_n)=1$. Here $\chi_{\mu}$ (resp. $\chi_{-\mu}$) is the algebraic character of $\RT(\rk\otimes_{\BQ}\C)$ corresponding to the weight $\mu$ (resp. $-\mu$). The invariant pairing $\langle\cdot,\cdot\rangle:F_{\mu}\times F_{\mu}^{\vee}\to\C$ is normalized such that $\langle u_{\mu},u_{\mu}^{\vee}\rangle=1$.
	
	Denote by $F_{^{\rho}\check\chi}$ the irreducible holomorphic finite-dimensional representation of $\RG(\rk\otimes_{\BQ}\C)$ that has the same infinitesimal character as that of $I_{{^{\rho}\check\chi}_{\infty}}$. Then it has highest weight $\mu=\{\mu^{\iota}\}_{\iota\in\CE_{\rk}}$ with
	\[
	\mu^{\iota}:=\begin{cases}
(-1,\dots,-1,n-1+{^{\rho}\check{\chi}}_{\iota}), & \text{if }\iota\in\CE_{\rk}^-({^{\rho}\check{\chi}}),\\
({^{\rho}\check{\chi}}_{\iota},0,\dots,0), & \text{if }\iota\in\CE_{\rk}^+({^{\rho}\check{\chi}}).
	\end{cases}
	\]
	
	Let $\Lambda(^{\rho}\check\chi)$ be the set of tuples 
	\[
	\lambda:=\{\lambda^{\iota}\}_{\iota\in\CE_{\rk}}:=\{(\lambda_1^{\iota},\dots,\lambda_n^{\iota})\}_{\iota\in\CE_{\rk}}\in(\Z_{\geq 0}^n)^{\CE_{\rk}}
	\]
	such that
	\[
	\lambda_1^{\iota}+\dots+\lambda_n^{\iota}=\begin{cases}
    -{^{\rho}\check{\chi}}_{\iota}-n,& \text{for all }\iota\in\CE_{\rk}^-({^{\rho}\check{\chi}}),\\
    {^{\rho}\check{\chi}}_{\iota}, & \text{for all }\iota\in\CE_{\rk}^+({^{\rho}\check{\chi}}).
	\end{cases}
	\]
	Write $g=\{g^{\iota}\}_{\iota\in\CE_{\rk}}\in\RG(\rk\otimes_{\BQ}\C)$ and $g^{\iota}=[g_{i,j}^{\iota}]_{1\leq i,j\leq n}$. For $1\leq i\leq n$, put
	\[
	{\det}_i(g^{\iota}):=\det\begin{bmatrix}
		g^{\iota}_{1,1} & \cdots & \widehat{g}^{\iota}_{1,i} & \cdots & g^{\iota}_{1,n}\\
		\vdots & & \vdots & & \vdots\\
		g^{\iota}_{n-1,1} & \cdots & \widehat{g}^{\iota}_{n-1,i} & \cdots & g^{\iota}_{n-1,n} 
	\end{bmatrix},
	\]
	where $\widehat{\cdot}\,$ indicates that the argument is omitted, i.e., the $i$-th column is deleted. For every $\lambda\in\Lambda(^{\rho}\check\chi)$ as above, we define $u_{\lambda}\in F_{^{\rho}\check\chi}$ by
	\begin{equation}\label{hw}
	u_{\lambda}(g):=\prod_{\iota\in\CE_{\rk}^+({^{\rho}\check{\chi}})}(g^{\iota}_{1,1})^{\lambda_1^{\iota}}\cdots (g^{\iota}_{1,n})^{\lambda_n^{\iota}}\cdot\prod_{\iota\in\CE_{\rk}^-({^{\rho}\check{\chi}})}\frac{1}{\det(g^{\iota})}\left(\frac{\det_1(g^{\iota})}{\det(g^{\iota})}\right)^{\lambda_1^{\iota}}\cdots\left(\frac{\det_n(g^{\iota})}{\det(g^{\iota})}\right)^{\lambda_n^{\iota}}.
	\end{equation}
Then $\{u_{\lambda}\}_{\lambda\in\Lambda(^{\rho}\check\chi)}$ is a basis of $F_{^{\rho}\check\chi}$ consisting of eigenvectors under the action of $\RT(\rk\otimes_{\BQ}\C)$. Let $\{u^{\vee}_{\lambda}\}_{\lambda\in\Lambda(^{\rho}\check\chi)}$ be the dual basis of  $F_{^{\rho}\check\chi}^{\vee}$. Recall from \eqref{eq:weight} that ${^{\rho}\check{\chi}}_{\iota}=n\cdot{^{\rho}\chi}_{\tau}$ for all $\iota\in\CE_{\rk}$ and $\tau\in\CE_{\RK}(\iota)$. We denote by $\lambda_{\circ}:=\{\lambda_\circ^{\iota}\}_{\iota\in\CE_{\rk}}\in\Lambda(^{\rho}\check\chi)$ the unique element such that $\lambda_\circ^{\iota}$  is a scalar multiple of $(1,\dots, 1)$ for every $\iota\in\CE_{\rk}$. Write 
    \begin{equation}\label{urhochi}
    u_{^{\rho}\check\chi}:=u_{\lambda_{\circ}},\qquad u_{^{\rho}\check\chi}^{\vee}:=u_{\lambda_{\circ}}^{\vee}.
    \end{equation}
	
	\subsection{Cohomology for degenerate principal series representations}
	
	We consider the relative Lie algebra cohomology
	\[
	\CH(I_{{^{\rho}\check\chi}_{\infty}}):=\RH^{d_n}\left(\mathfrak{g}_{\infty},\widetilde{K}'_{\infty};F_{^{\rho}\check\chi}^{\vee}\otimes I_{{^{\rho}\check\chi}_{\infty}}\right)
	\]
	as well as 
	\[
	\CH(I_{^{\rho}\check\chi}):=\RH^{d_n}\left(\mathfrak{g}_{\infty},\widetilde{K}'_{\infty};F_{^{\rho}\check\chi}^{\vee}\otimes I_{^{\rho}\check\chi}\right).
	\]
	They are related by the canonical isomorphism
	\[
	\iota_{\mathrm{can}}:\CH(I_{{^{\rho}\check\chi}_{\infty}})\otimes I_{{^{\rho}\check\chi}_{\mathrm{f}}}\xrightarrow{\sim}\CH(I_{^{\rho}\check\chi}).
	\]
	As a consequence of the Künneth formula and  Delorme's lemma (\cite[Theorem III.3.3]{BW}), we have that $\dim \CH(I_{{^{\rho}\check\chi}_{\infty}})=1$. In the rest of this subsection, following \cite[Section 5.1]{JLLS}, we will fix a generator 
    \begin{equation}\label{generator}
    [\kappa_{^\rho\check\chi}]\in\CH(I_{{^{\rho}\check\chi}_{\infty}}).
    \end{equation}

We have the identification  $\RK\otimes_\BQ\BC = \prod_{\tau\in\CE_\RK}\C_{\tau}$, where $\C_{\tau}:=\C$ viewed as a $\RK$-algebra via $\tau$. Similarly, we have that $\rk\otimes_\BQ\BC=\prod_{\iota\in \CE_\rk} \C_{\iota}$. Using the obvious embedding 
\[
K_\infty\hookrightarrow \prod_{\iota \in\CE_{\rk}^+({^{\rho}\check{\chi}})}\GL_n(\C_{\iota}),
\]
every finite-dimensional representation of $K_\infty$ is identified with a  finite-dimensional holomorphic representation of $\prod_{\iota\in\CE_{\rk}^+({^{\rho}\check{\chi}})}\GL_n(\C_{\iota})$. 
By \cite[Lemmas 4.1 and 4.3]{DX}, there is a unique irreducible $K_{\infty}$-subrepresentation $\Xi_n\subset\wedge^{d_n}\mathfrak{p}_{\infty}$ of highest weight
\[
\{(n-1,-1,\ldots,-1)\}_{\iota \in\CE_{\rk}^+({^{\rho}\check{\chi}})},
\]
    and a unique irreducible $K_{\infty}$-subrepresentation $\Xi_{^\rho\check\chi}\subset I_{{^{\rho}\check\chi}_{\infty}}$ of 
    highest weight 
    \[
    \{({^{\rho}\check{\chi}}_{\iota}-{^{\rho}\check{\chi}_{\overline{\iota}}}, 0, \ldots, 0)\}_{\iota \in\CE_{\rk}^+({^{\rho}\check{\chi}})}.
    \]
    Then $\Ad_{\gamma_{\infty}^{-1}}(\Xi_n)$ and $\gamma^{-1}_{\infty}.\Xi_{^\rho\check\chi}$ are the corresponding irreducible $K_{\infty}'$-subrepresentations of $\Ad_{\gamma_{\infty}^{-1}}(\wedge^{d_n}\mathfrak{p}_{\infty})$ and $I_{{^{\rho}\check\chi}_{\infty}}$ respectively. By \cite[Lemma 5.1]{JLLS}, we have that
	\[
	\CH(I_{{^{\rho}\check\chi}_{\infty}})=\mathrm{Hom}_{\widetilde{\mathfrak{k}}'_{\infty}}(\Ad_{\gamma_{\infty}^{-1}}(\Xi_n),\gamma_{\infty}^{-1}.\Xi_{^{\rho}\check\chi}\otimes F_{^{\rho}\check\chi}^{\vee}).
	\]

	Let $\Upsilon(^{\rho}\check\chi)$ be the set of tuples
	\[
	\beta:=\{\beta^{\iota}\}_{\iota\in\CE^+_{\rk}({^{\rho}\check{\chi}})}:=\{(\beta_1^{\iota},\dots,\beta_n^{\iota})\}_{\iota\in\CE_{\rk}^+({^{\rho}\check{\chi}})}\in(\Z^n_{\geq 0})^{\CE_{\rk}^+({^{\rho}\check{\chi}})}
	\]
	such that
	\[
	\beta^{\iota}_1+\cdots+\beta_n^{\iota}={^{\rho}\check{\chi}}_{\iota}-{^{\rho}\check{\chi}}_{\overline{\iota}}\quad\text{for all }\iota\in\CE_{\rk}^+(^\rho\check{\chi}).
	\]
	For every $\beta\in\Upsilon(^{\rho}\check\chi)$ as above, we define $\varphi_{\beta}\in I_{{^{\rho}\check\chi}_{\infty}}$ by
	\[
	\varphi_{\beta}(g)=\prod_{\substack{v\in\Sigma_{\rk}\\v|\infty}}\frac{\iota_v(g^{v}_{n,1})^{\beta^{\iota_v}_1}\cdots\iota_v(g^{v}_{n,n})^{\beta^{\iota_v}_n}}{\left(|g^{v}_{n,1}|_v+\cdots+|g^{v}_{n,n}|_v\right)^{-{^{\rho}\check{\chi}}_{\overline{\iota_v}}}},\qquad g=\left\{[g_{i,j}^{v}]_{1\leq i,j\leq n}\right\}_{\substack{v\in\Sigma_{\rk}\\v|\infty}}\in\RG(\rk_{\infty}),
	\]
	where $\iota_v\in\CE_{\rk}^+({^{\rho}\check{\chi}})$ is the embedding that induces $v$. Then $\{\varphi_{\beta}\}_{\beta\in\Upsilon(^{\rho}\check\chi)}$ forms a basis of $\Xi_{^{\rho}\check\chi}$, see \cite[page 731]{IM22}. Let $\Upsilon_n$ be the set of tuples
	\[
	\nu:=\{\nu^{\iota}\}_{\iota\in\CE_{\rk}^+({^{\rho}\check{\chi}})}:=\{(\nu_1^{\iota},\dots,\nu_n^{\iota})\}_{\iota\in\CE_{\rk}^+({^{\rho}\check{\chi}})}\in(\Z_{\geq0}^n)^{\CE_{\rk}^+({^{\rho}\check{\chi}})}
	\]
	such that
	\[
	\nu_1^{\iota}+\dots+\nu_n^{\iota}=n\quad\text{for all }\iota\in\CE_{\rk}^+(^\rho\check{\chi}).
	\]
	For every $\nu\in\Upsilon_n$ as above, we define the vector $\Psi_{\nu}\in\Xi_{^{\rho}\check\chi}\otimes F_{^{\rho}\check\chi}^{\vee}$ by
	\[
	\Psi_{\nu}:=\sum_{\lambda\in\Lambda(^{\rho}\check\chi)}\varphi_{\beta_{\nu,\lambda}}\otimes u_{\lambda}^{\vee},
	\]
	where
	\[
	\beta_{\nu,\lambda}:=\left\{\left(\nu_1^{\iota}+\lambda_1^{\iota}+\lambda_1^{\overline{\iota}},\dots,\nu_n^{\iota}+\lambda_n^{\iota}+\lambda_n^{\overline{\iota}}\right)\right\}_{\iota\in\CE_{\rk}^+({^{\rho}\check{\chi}})}\in \Upsilon(^{\rho}\check\chi),
	\]
    and
    \[
    \lambda=\{(\lambda_1^{\iota},\dots,\lambda_n^{\iota})\}_{\iota\in\CE_{\rk}}\in(\Z_{\geq 0}^n)^{\CE_{\rk}}.
    \]
	Then $\{\Psi_{\nu}\}_{\nu\in\Upsilon_n}$ is a basis of the unique irreducible $K_{\infty}$-subrepresentation of $\Xi_{^{\rho}\check\chi}\otimes F_{^{\rho}\check\chi}^{\vee}$ that is isomorphic to $\Xi_n$, and $\Psi_{\widetilde{\nu}}$ with $\widetilde{\nu}:=\{(0,\dots,0,n)\}_{\iota\in\CE_{\rk}^+({^{\rho}\check{\chi}})}\in\Upsilon_n$ is a lowest weight vector. Indeed, this is obvious when ${^{\rho}\check{\chi}}_{\overline{\iota}}=-n$ and ${^{\rho}\check{\chi}}_{\iota}=0$ for every $\iota\in\CE_{\rk}^+({^{\rho}\check{\chi}})$; in general this basis can be obtained by applying the translation functor constructed in \cite[Section 2.3]{JLS}. 
	
	Clearly,
	\[
	\bigwedge_{\iota\in\CE_{\rk}^+({^{\rho}\check{\chi}})}\left(e_{1,n}^{\iota}\wedge e_{2,n}^{\iota}\wedge\cdots\wedge e_{n-1,n}^{\iota}\right)\in\wedge^{d_n}\mathfrak{p}_{\infty}
	\]
	is a lowest weight vector in $\Xi_n$, and as in \cite[Section 5.1]{JLLS}, we choose the generator $[\kappa_{^{\rho}\check\chi}]\in\CH(I_{{^{\rho}\check\chi}_{\infty}})=\mathrm{Hom}_{\widetilde{\mathfrak{k}}'_{\infty}}(\Ad_{\gamma_{\infty}^{-1}}(\Xi_n),\gamma_{\infty}^{-1}.\Xi_{^{\rho}\check\chi}\otimes F_{^{\rho}\check\chi}^{\vee})$ such that
	\[
	[\kappa_{^{\rho}\check\chi}]\left(\bigwedge_{\iota\in\CE_{\rk}^+({^{\rho}\check{\chi}})}\Ad_{\gamma_{\infty}^{-1}}\left(e_{1,n}^{\iota}\wedge e_{2,n}^{\iota}\wedge\cdots\wedge e_{n-1,n}^{\iota}\right)\right)=\gamma^{-1}_{\infty}.\Psi_{\widetilde{\nu}}.
	\]
	By \cite[Remark 4.4]{DX}, we have that
	\[
	\bigwedge_{\iota\in\CE_{\rk}^+({^{\rho}\check{\chi}})}(e_1^{\iota}\wedge\cdots\wedge e_{n-1}^{\iota})\in\Xi_n
	\]
	and it is straightforward to compute that
	\begin{equation}\label{kappa}
		[\kappa_{^{\rho}\check\chi}]\left(\bigwedge_{\iota\in\CE_{\rk}^+({^{\rho}\check{\chi}})}\Ad_{\gamma_{\infty}^{-1}}(e_1^{\iota}\wedge\cdots\wedge e_{n-1}^{\iota})\right)=\gamma_{\infty}^{-1}.\Psi_{\nu_{\circ}}
	\end{equation}
	with $\nu_{\circ}:=\{(1,\dots,1)\}_{\iota\in\CE_{\rk}^+({^{\rho}\check{\chi}})}\in\Upsilon_n$.

	\subsection{The archimedean modular symbols}
	
	The restriction of the representation $F_{^{\rho}\check\chi}$ to $\RH(\rk\otimes_{\BQ}\C)$ admits a direct sum decomposition into one-dimensional weight spaces:
	\[
	F_{^{\rho}\check\chi}=\bigoplus_{\lambda\in\Lambda(^{\rho}\check\chi)}\C\cdot(\gamma_{\infty}^{-1}.u_{\lambda}).
	\]
    Then $u_{^{\rho}\check\chi}':=\gamma_{\infty}^{-1}.u_{^{\rho}\check\chi}$, with $u_{^\rho\check\chi}$ given in \eqref{urhochi}, generates the unique eigenspace of $\RH(\rk_{\infty})$ with eigenvalue ${^{\rho}\chi}_{\infty}^{-1}$. We consider the cohomology spaces
	\[
	\begin{aligned}\CH({^{\rho}\chi}_{\infty})&:=\RH^0(\mathfrak{h}_{\infty},\widetilde{K}_{\infty}^{\RH};u'_{^{\rho}\check\chi}\otimes{^{\rho}\chi}_{\infty}):=\RH^0(\mathfrak{h}_{\infty},\widetilde{K}_{\infty}^{\RH};\C\cdot u'_{^{\rho}\check\chi}\otimes{^{\rho}\chi}_{\infty}),\\
    \CH({^{\rho}\chi})&:=\RH^0(\mathfrak{h}_{\infty},\widetilde{K}_{\infty}^{\RH};u'_{^{\rho}\check\chi}\otimes{^{\rho}\chi}):=\RH^0(\mathfrak{h}_{\infty},\widetilde{K}_{\infty}^{\RH};\C\cdot u'_{^{\rho}\check\chi}\otimes{^{\rho}\chi}),
    \end{aligned}
	\]
	which are related by the canonical isomorphism
	\[
	\iota_{\mathrm{can}}:\CH({^{\rho}\chi}_{\infty})\otimes{^{\rho}\chi}_{\mathrm{f}}\xrightarrow{\sim}\CH({^{\rho}\chi}).
	\]
	Write $[u'_{^{\rho}\check\chi}\otimes{^{\rho}\chi}_{\infty}]$ for the canonical generator of the one-dimensional space $\CH({^{\rho}\chi}_{\infty})$.

	The embedding $\RH(\rk_{\infty})\hookrightarrow\RG(\rk_{\infty})$ induced by \eqref{i} further induces a map
	\be \label{iinf}
	\mathfrak{i}_{\infty}:\RH(\rk_{\infty})/\widetilde{K}^{\RH}_{\infty}\to\RG(\rk_{\infty})/\widetilde{K}_{\infty}'.
	\ee
	We define the archimedean modular symbol
	\[
	\CP_{{^{\rho}\chi}_{\infty}}:\CH(I_{{^{\rho}\check\chi}_{\infty}})\otimes\CH({^{\rho}\chi}_{\infty})\otimes\mathfrak{O}_{\RH}\to\C
	\]
	as the composition
	\begin{eqnarray*}
		\CP_{{^{\rho}\chi}_{\infty}}& : &  \RH^{d_n}(\mathfrak{g}_{\infty},\widetilde{K}'_{\infty};F_{^{\rho}\check\chi}^{\vee}\otimes I_{{^{\rho}\check\chi}_{\infty}})\otimes\RH^0(\mathfrak{h}_{\infty},\widetilde{K}_{\infty}^{\RH};u'_{^{\rho}\check\chi}\otimes{^{\rho}\chi}_{\infty})\otimes\mathfrak{O}_{\RH}\\
		& \xrightarrow{\mathfrak{i}_\infty^{\ast}}&  \RH^{d_n}(\mathfrak{h}_{\infty},\widetilde{K}_{\infty}^\RH;F_{^{\rho}\check\chi}^{\vee}\otimes u'_{^{\rho}\check\chi}\otimes I_{{^{\rho}\check\chi}_{\infty}}\otimes {^{\rho}\chi}_{\infty})\otimes\mathfrak{O}_{\RH}\\
		& \xrightarrow{\langle\cdot,\cdot\rangle\otimes\RZ_{{^{\rho}\chi}_{\infty}}^{\circ}} & \RH^{d_n}(\mathfrak{h}_{\infty},\widetilde{K}_{\infty}^\RH;\frak M_\infty^*)\otimes\mathfrak{O}_{\RH}\\
		 & = & \C.
	\end{eqnarray*}
	Here the first arrow is the map induced by restriction of cohomology and the cup product; the second arrow is the map induced by the invariant pairing
    \[
    \langle\cdot,\cdot\rangle:F_{^\rho\check\chi}^{\vee}\times F_{^\rho\check\chi}\to\C
    \]
and the linear functional
\[
\RZ^{\circ}_{^\rho\chi_{\infty}}:I_{{^{\rho}\check\chi}_{\infty}}\otimes {^{\rho}\chi}_{\infty}\to\mathfrak{M}_{\infty}^{\ast}\qquad (\mathfrak{M}_{\infty}^{\ast}\text{ is the dual space});
\]
the last identification is \eqref{MO}. We calculate the archimedean modular symbol in the following lemma.
	
	\begin{lemp}\label{lem:archimedeanmodularsymbol}
		For the fixed generators $[\kappa_{^{\rho}\check\chi}]\in\CH(I_{{^{\rho}\check\chi}_{\infty}})$, $[u'_{^{\rho}\check\chi}\otimes{^{\rho}\chi}_{\infty}]\in\CH({^{\rho}\chi}_{\infty})$ and $\mathbf{o}'_{\RH}\in\mathfrak{O}_{\RH}$ in \eqref{o'}, we have that
		\[
		\CP_{{^{\rho}\chi}_{\infty}}\left([\kappa_{^{\rho}\check\chi}]\otimes[u'_{^{\rho}\check\chi}\otimes{^{\rho}\chi}_{\infty}]\otimes\mathbf{o}'_{\RH}\right)=1.
		\]
	\end{lemp}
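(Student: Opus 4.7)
The plan is to unfold the definition of $\CP_{{^{\rho}\chi}_\infty}$ step by step: first evaluate the restricted cocycle on a distinguished wedge in $\wedge^{d_n}\mathfrak{p}_\infty^\RH$, then pair with $u'_{^{\rho}\chi}$, and finally compute the resulting archimedean zeta integral against $\mathrm{d}^\circ_\infty x$. The collection $\{\Ad_{\gamma_\infty^{-1}}(e_i^\iota)\}_{1\leq i\leq n-1,\,\iota\in\CE_{\rk}^+({^{\rho}\check{\chi}})}$ is a basis of $\mathfrak{p}_\infty^\RH$, and $\omega_\infty$ is by construction the corresponding dual-basis wedge. Consequently $\mathfrak{i}_\infty^\ast[\kappa_{^{\rho}\chi}]$, evaluated on $\bigwedge_\iota\Ad_{\gamma_\infty^{-1}}(e_1^\iota\wedge\cdots\wedge e_{n-1}^\iota)$, equals $\gamma_\infty^{-1}.\Psi_{\nu_\circ}$ by \eqref{kappa}.

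Next, expand $\Psi_{\nu_\circ}=\sum_\lambda \varphi_{\beta_{\nu_\circ,\lambda}}\otimes u_\lambda^\vee$ and pair with $u'_{^{\rho}\chi}=\gamma_\infty^{-1}.u_{\lambda_\circ}$. The $\RG(\rk\otimes_\BQ\C)$-invariance of $\langle\cdot,\cdot\rangle$ together with the dual-basis identity $\langle u_\lambda^\vee,u_{\lambda'}\rangle=\delta_{\lambda,\lambda'}$ collapses the sum to its $\lambda=\lambda_\circ$ term, producing $\gamma_\infty^{-1}.\varphi_{\beta_\circ}\in I_{{^{\rho}\chi}_\infty}$ with $\beta_\circ:=\beta_{\nu_\circ,\lambda_\circ}$. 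Using ${^{\rho}\check{\chi}}_\iota=n\cdot{^{\rho}\chi}_\tau$ (hence $\lambda_{\circ,j}^\iota={^{\rho}\chi}_\tau$ for $\iota\in\CE_{\rk}^+({^{\rho}\check{\chi}})$ and $-{^{\rho}\chi}_{\overline{\tau}}-1$ for $\iota\in\CE_{\rk}^-({^{\rho}\check{\chi}})$), each coordinate of $\beta_\circ^\iota$ simplifies to ${^{\rho}\chi}_\tau-{^{\rho}\chi}_{\overline{\tau}}$ for any $\tau\in\CE_{\RK}(\iota)$.

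By Lemma \ref{measure} and the definition \eqref{o'}, we have $\omega_\infty\otimes\mathbf{o}_\RH'=\mathrm{d}^\circ_\infty x\in\mathfrak{M}_\infty$, so the modular symbol equals $\RZ^\circ_{{^{\rho}\chi}_\infty}(0,\gamma_\infty^{-1}.\varphi_{\beta_\circ};{^{\rho}\chi}_\infty,\mathrm{d}^\circ_\infty x)$. Factor this as a product over archimedean places $v\mid\infty$. At each $v$, use Lemma \ref{schwartzsection} to realize $\gamma_v^{-1}.\varphi_{\beta_\circ,v}=\mathrm{f}_{{^{\rho}\chi}_v}(\Phi_v)$ for a Gaussian-times-polynomial Schwartz function $\Phi_v$ whose exponents are dictated by $\beta_\circ$ and whose variable change encodes the matrix $\gamma_v$ through $\mathfrak{j}_v$. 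By \eqref{theta=I} the normalized local integral then reduces to $\RZ^\Theta_v(0,\Phi_v;{^{\rho}\chi}_v,\cdot)/\RL(0,{^{\rho}\chi}_v)$, which factors over $w\mid v$ as a product of classical $\C^\times$-Tate integrals, evaluable in closed form by the Tate thesis.

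The main obstacle is this explicit archimedean calculation: one has to (i) exhibit $\Phi_v$ compatible with the twist by $\gamma_v$ so that under $\mathrm{f}_{{^{\rho}\chi}_v}$ it yields precisely $\gamma_v^{-1}.\varphi_{\beta_\circ,v}$, and (ii) verify that the resulting Mellin transform with the Tate measure contributes exactly the factor $(4^{n-1}n)^{-[\rk:\BQ]/2}$ needed to cancel the constant from Lemma \ref{measure}. This cancellation is the very reason for the rational normalization $\mathbf{o}_\RH'$ in \eqref{o'}; once verified, it yields the final value $1$.
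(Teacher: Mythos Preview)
Your plan matches the paper's argument step for step: the evaluation via \eqref{kappa}, the pairing that extracts $\gamma_\infty^{-1}.\varphi_{\beta_\circ}$, the identification $\omega_\infty\otimes\mathbf{o}_\RH'=\mathrm{d}_{\infty}^{\circ}x$ from Lemma~\ref{measure} and \eqref{o'}, and the reduction to archimedean Tate integrals through \eqref{theta=I} are exactly what the paper does.

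Your anticipated obstacle (ii), however, is a misunderstanding that you should drop. Once $\omega_\infty\otimes\mathbf{o}_\RH'=\mathrm{d}_{\infty}^{\circ}x$ is in hand, the constant $(4^{n-1}n)^{[\rk:\BQ]/2}$ has been \emph{fully} absorbed; there is no residual factor for the Mellin transform to supply. The paper simply takes the untwisted Schwartz function
\[
\Phi_{\beta_\circ}(x)=\prod_{v\mid\infty}\iota_v(x_1)^{\beta_{\circ,1}^{\iota_v}}\cdots\iota_v(x_n)^{\beta_{\circ,n}^{\iota_v}}\,e^{-2\pi(|x_1|_v+\cdots+|x_n|_v)},
\]
for which $\mathrm{f}_{{^{\rho}\chi}_\infty}(\Phi_{\beta_\circ})=\varphi_{\beta_\circ}$ directly. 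The $\gamma_\infty^{-1}$ twist then enters only through the elementary identity $e_n\,y\,\gamma_v^{-1}=(y_{w_1},\ldots,y_{w_n})$ for $y\in\RH(\rk_v)$ (the rows of $\gamma_v$ are the idempotents $\mathfrak{j}_v(1_{w_i})$), which diagonalizes the argument of $\Phi_{\beta_\circ}$. After combining ${^{\rho}\chi}_v(y)$ with the monomial, each factor becomes the standard complex Tate integral $\int_{\RK_w^\times}|x_w|_w^{s-{^{\rho}\chi}_{\tau_w}}e^{-2\pi|x_w|_w}\,\mathrm{d}^{\circ}x_w=\RL(s,{^{\rho}\chi}_w)$, so the normalized zeta integral is $1$ on the nose---no extra constants, no further cancellation.
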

	
	\begin{proof}
		By Lemma \ref{measure}, we have that
        \[
        \bigwedge_{\iota\in\CE_{\rk}^+({^{\rho}\check{\chi}})}\Ad_{\gamma_{\infty}^{-1}}(e_1^{\iota\ast}\wedge\cdots\wedge e_{n-1}^{\iota\ast})\otimes \mathbf{o}'_{\RH}=\mathrm{d}_\infty^{\circ}x.
        \]
        Therefore, in view of \eqref{kappa} and the definition of the archimedean modular symbol, we have that
		\[
		\begin{aligned}
			&\CP_{{^{\rho}\chi}_{\infty}}\left([\kappa_{^{\rho}\check\chi}]\otimes[u'_{^{\rho}\check\chi}\otimes{^{\rho}\chi}_{\infty}]\otimes\mathbf{o}'_{\RH}\right)\\
			=\ &\RZ_{{^{\rho}\chi}_{\infty}}^{\circ}\left(\langle\gamma_{\infty}^{-1}.\Psi_{\nu_{\circ}},u'_{^{\rho}\check\chi}\rangle\otimes{^{\rho}\chi}_{\infty}\otimes\mathrm{d}_\infty^{\circ}x\right).
		\end{aligned}
		\]
		It is clear that $\langle\gamma_{\infty}^{-1}.\Psi_{\nu_{\circ}},u'_{^{\rho}\check\chi}\rangle=\gamma_{\infty}^{-1}.\varphi_{\beta_{\circ}}$, where $\beta_{\circ}=\{(\beta_1^{\iota},\dots,\beta_n^{\iota})\}_{\iota\in\CE_{\rk}^+(^\rho\chi)}\in\Upsilon(^{\rho}\check\chi)$ is the unique element such that $\beta_1^{\iota}=\cdots=\beta_n^{\iota}$ for every $\iota\in\CE_{\rk}^+({^{\rho}\check{\chi}})$. Thus 
		\[	\CP_{{^{\rho}\chi}_{\infty}}\left([\kappa_{^{\rho}\check\chi}]\otimes[u'_{^{\rho}\check\chi}\otimes{^{\rho}\chi}_{\infty}]\otimes\mathbf{o}'_{\RH}\right)=\RZ_{{^{\rho}\chi}_{\infty}}^{\circ}(\gamma_{\infty}^{-1}.\varphi_{\beta_{\circ}}\otimes{^{\rho}\chi}_{\infty}\otimes\mathrm{d}_\infty^{\circ}x).
		\]  
        
It remains to compute the archimedean toroidal integral. For every $\beta:=\{\beta^{\iota}\}_{\iota\in\CE^+_{\rk}({^{\rho}\check{\chi}})}:=\{(\beta_1^{\iota},\dots,\beta_n^{\iota})\}_{\iota\in\CE_{\rk}^+({^{\rho}\check{\chi}})}\in\Upsilon(^\rho\check\chi)$, define Schwartz functions $\Phi_{\beta}\in\CS(\rk_{\infty}^{1\times n})$ by
\[
\Phi_{\beta}(x)=\prod_{\substack{v\in\Sigma_{\rk}\\v|\infty}}\iota_v(x_{1,v})^{\beta_1^{\iota_v}}\cdots\iota_v(x_{n,v})^{\beta_n^{\iota_v}}e^{-2\pi\left(|x_{1,v}|_v+\cdots+|x_{n,v}|_v\right)},
\]
where
\[
x=\left\{\begin{bmatrix}
    x_{1,v} & x_{2,v} & \dots & x_{n,v}
\end{bmatrix}\right\}_{\substack{v\in\Sigma_{\rk}\\v|\infty}}\in\rk_{\infty}^{1\times n}=\prod_{\substack{v\in\Sigma_{\rk}\\v|\infty}}\rk_v^{1\times n},
\]
and $\iota_v\in\CE_{\rk}^+(^\rho\check{\chi})$ indicates the embedding that induces the place $v$. Take $\od\!_va$ in \eqref{f} to be the Haar measure $\od\!^\circ x_v$ on $\rk_v^\times$. Then $\mathrm{f}_{^\rho\chi_{\infty}}(\Phi_{\beta})=\varphi_{\beta}$, where $\mathrm{f}_{^\rho\chi_{\infty}}:=\otimes_{v|\infty}\mathrm{f}_{^\rho\chi_{v}}$ is defined as in \eqref{f}. In view of \eqref{theta=I}, we calculate that 
\[
\begin{aligned}
    \RZ_{{^{\rho}\chi}_{\infty}}^{\circ}(s,\gamma_{\infty}^{-1}.\varphi_{\beta_{\circ}};{^{\rho}\chi}_{\infty},\mathrm{d}_\infty^{\circ}x)=\frac{1}{\RL_{\infty}(s,{^\rho\chi})}\cdot\prod_{\substack{w\in\Sigma_{\RK}\\w|\infty}}\int_{\RK_w^{\times}}|x_w|^{s-{^\rho\chi}_{\tau_w}}_w\cdot e^{-2\pi|x_w|_w}\mathrm{d}^{\circ}x_w=1,
\end{aligned}
\]
where $\tau_w\in\CE_{\RK}^-(^\rho\chi)$ is the embedding that induces the place $w$.

This completes the proof of the lemma.
	\end{proof}

	\section{Modular symbols and Hecke L-values}
	
	We are going to define the modular symbol that provides a cohomological interpretation of our toroidal integral, and then prove the main theorem on Hecke L-values.

	\subsection{$\mathrm{Aut}(\C)$-action on coefficient systems}
	
	Let $\mathrm{Aut}(\C)$ act on the space of algebraic functions on $\RG(\rk\otimes_{\BQ}\C)$ by
	\[
	({^{\sigma}f})(x):=\sigma(f(\sigma^{-1}(x))),\qquad x\in\RG(\rk\otimes_{\BQ}\C),
	\]
	where $\mathrm{Aut}(\C)$ acts on $\RG(\rk\otimes_{\BQ}\C)$ through its action on the second factor of $\rk\otimes_{\BQ}\C$. For any dominant weight $\mu=\{\mu^{\iota}\}_{\iota\in\CE_{\rk}}$, using the realizations of $F_{\mu}$ and $F_{\mu}^{\vee}$ in \eqref{algebraic}, this induces $\sigma$-linear isomorphisms
	\[
	\sigma:F_{\mu}\to F_{{^{\sigma}\mu}},\qquad \sigma:F_{\mu}^{\vee}\to F_{{^{\sigma}\mu}}^{\vee}
	\]
	for every $\sigma\in\mathrm{Aut}(\C)$, where
	\[
	{^{\sigma}\mu}:=\{\mu^{\sigma^{-1}\circ\iota}\}_{\iota\in\CE_{\rk}}.
	\]
	Applying this to the representations $F_{^{\rho}\check\chi}$ and $F_{^{\rho}\check\chi}^{\vee}$, we obtain $\sigma$-linear isomorphisms
	\begin{equation}\label{sigmaF}
		\sigma:F_{^{\rho}\check\chi}\to F_{{^{\sigma\circ\rho}\check\chi}},\qquad \sigma: F_{^{\rho}\check\chi}^{\vee}\to F^{\vee}_{{^{\sigma\circ\rho}\check\chi}}.
	\end{equation}
	Moreover, we have
    \begin{equation}\label{sigmaurhochi}
\sigma(u_{^{\rho}\check\chi})=u_{{^{\sigma\circ\rho}\check\chi}}\qquad\text{and}\qquad\sigma(u^{\vee}_{^\rho\check\chi})=u^{\vee}_{{^{\sigma\circ\rho}\check\chi}}.
    \end{equation}
    Recall the element $\gamma_\infty \in \RG(\rk_\infty)\subset \RG(\rk\otimes_\BQ\BC)$ given in \eqref{gammainf} and the vector $u'_{^\rho\check\chi}:=\gamma_{\infty}^{-1}.u_{^\rho\check\chi}\in F_{^{\rho}\check\chi}$. We calculate $\sigma(u'_{^\rho\check\chi})$ in the following lemma.
	
  \begin{lemp}\label{sigmav}
	For every $\sigma\in\mathrm{Aut}(\C)$, we have that
	\[
	\sigma(u'_{^\rho\check\chi})=\prod_{\iota\in\CE_{\rk}^-(^\rho\check{\chi})}(-1)^{p(\sigma,\iota)}\cdot u'_{^{\sigma\circ\rho}\check\chi}\in F_{{^{\sigma\circ\rho}\check\chi}},
	\]
    where $p(\sigma,\iota)$ is defined in Section \ref{sec:order}.
\end{lemp}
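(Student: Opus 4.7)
The plan is to reduce the claim to a sign computation on permutation matrices, exploiting the uniformity of $\lambda_\circ$ in each $\iota$-component.

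First, from the $\mathrm{Aut}(\C)$-equivariance $\sigma(g.u) = \sigma(g).\sigma(u)$ (a direct consequence of the definitions of the two actions) and from \eqref{sigmaurhochi}, one deduces $\sigma(u'_{^\rho\chi}) = \sigma(\gamma_\infty)^{-1}.u_{^{\sigma\circ\rho}\chi}$. Setting $\delta := \gamma_\infty\sigma(\gamma_\infty)^{-1}\in\RG(\rk\otimes_\BQ\C)$, the lemma is equivalent to
\[
\delta.u_{^{\sigma\circ\rho}\chi} = \prod_{\iota\in\CE_\rk^-(^\rho\check\chi)}(-1)^{p(\sigma,\iota)}\cdot u_{^{\sigma\circ\rho}\chi}.
\]

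Second, I would identify $\delta^\iota\in\RG(\C)$ (for each $\iota\in\CE_\rk$, under the identification $\rk\otimes_\BQ\C = \C^{\CE_\rk}$) as a permutation matrix. Choose a $\rk$-basis $\{\beta_1,\dots,\beta_n\}$ of $\RK$ with $\mathfrak{j}(\beta_k) = e_k$, so in particular $\beta_n = 1$. Unpacking $\gamma_v = [\mathfrak{j}_v(1_{w_i})]$ via the change-of-basis from $\{1_{w_i}\}$ to $\{\beta_k\}$ in $\RK_v$ yields $\gamma_\infty^\iota = B(\iota)^{-T}$, where $B(\iota) := [\tau_i(\beta_k)]_{i,k}$ with $\CE_\RK(\iota) = \{\tau_1\prec\cdots\prec\tau_n\}$. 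Applying $\sigma$ to $\gamma_\infty^{\sigma^{-1}\iota}$ permutes the rows of $B(\sigma^{-1}\iota)$ according to the bijection $\sigma\circ\cdot:\CE_\RK(\sigma^{-1}\iota)\to\CE_\RK(\iota)$, and a short manipulation then gives $\delta^\iota = P^T$, where $P$ is the permutation matrix encoding this bijection with respect to the two orderings. The very same sign analysis used in Section 2.2 to establish $\sigma(\delta(\RK/\rk,\iota))/\delta(\RK/\rk,\sigma\iota) = (-1)^{p(\sigma,\iota)}$ yields $\det(\delta^\iota) = (-1)^{p(\sigma,\sigma^{-1}\iota)}$.

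Third, I would compute the scalar by which $\delta^\iota$ acts on $u_{^{\sigma\circ\rho}\chi}$ via right translation, using the explicit formula \eqref{hw}. The uniformity of each component of $\lambda_\circ$ is decisive: for $\iota\in\CE_\rk^+(^{\sigma\circ\rho}\check\chi)$, the product $\prod_j(g^\iota_{1,j})^{{^{\sigma\circ\rho}\check\chi}_\iota/n}$ is manifestly invariant under column permutation; for $\iota\in\CE_\rk^-(^{\sigma\circ\rho}\check\chi)$, the cofactor identity $\det_j(g)/\det(g) = (-1)^{n+j}(g^{-1})_{j,n}$ together with $\sum_j\pi(j) = \sum_j j$ shows that $\prod_j(\det_j(g)/\det(g))^b$ is likewise invariant under any column permutation $\pi$, so the only sign contribution comes from the factor $\det(g^\iota)^{-1}$ in \eqref{hw} and equals $\det(\delta^\iota)$. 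Multiplying these signs over $\iota\in\CE_\rk^-(^{\sigma\circ\rho}\check\chi)$ and re-indexing via $\iota' = \sigma^{-1}\iota$ using $\sigma(\CE_\rk^-(^\rho\check\chi)) = \CE_\rk^-(^{\sigma\circ\rho}\check\chi)$ produces the asserted product. The principal obstacle is the second step: carefully tracking the $\mathrm{Aut}(\C)$-action on $\gamma_\infty$ across the various $\iota$-fibers, and matching the resulting determinant sign with the convention for $p(\sigma,\iota)$ fixed in Section 2.2.
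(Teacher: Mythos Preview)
Your proposal is correct and follows essentially the same approach as the paper's proof: both reduce to showing that $\sigma(\gamma_\infty)$ differs from $\gamma_\infty$ by a permutation matrix on each $\iota$-component, and then read off the sign from the explicit formula \eqref{hw} for $u_{\lambda_\circ}$. Your parametrization $\gamma_\infty^\iota = B(\iota)^{-T}$ via a $\rk$-basis is a minor repackaging of the paper's direct analysis of the idempotents $1_{\tau_i}$, and your cofactor argument in the third step makes explicit the column-permutation invariance that the paper compresses into the citation ``by \eqref{hw}''.
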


\begin{proof}  
	The isomorphism $\RK\xrightarrow{\sim}\rk^{1\times n}$ of \eqref{i} induces an $\Aut(\BC)$-equivariant isomorphism 
	\[
	\RK\otimes_\BQ\BC = \prod_{\tau\in\CE_\RK}\C_{\tau} \xrightarrow{\sim} \rk^{1\times n}\otimes_\BQ\BC  =\prod_{\iota\in\CE_\rk}\C_{\iota}^{1\times n}. 
	\]
	Similar to \eqref{iC},  
    we have an isomorphism
	\[
	\frak j_\iota: \RK_{\iota}:=\prod_{\tau\in\CE_\RK(\iota)}\C_{\tau} \xrightarrow{\sim} \C_{\iota}^{1\times n}.
	\] 
    Write $\gamma_\infty = \{\gamma_\iota\}_{\iota\in\CE_\rk}$ as an element of $\RG(\rk\otimes_\BQ\BC)$ with
	\[
	\gamma_\iota =\begin{bmatrix}  \mathfrak j_\iota(1_{\tau_1}) \\ 
		\vdots\\
		\mathfrak j_\iota(1_{\tau_n})\end{bmatrix}\in \RG(\C_{\iota}),
	\]
	where $\CE_{\RK}(\iota)=\{\tau_1 \prec\cdots\prec\tau_n\}$ and $1_{\tau_i}$ is the identity element of 
	$\C_{\tau_i}$, $i=1,\ldots, n$. 

	Note that for $\{x_\iota\}_{\iota\in\CE_\rk}\in (\rk\otimes_{\BQ}\C)^{1\times n}=\prod_{\iota\in\CE_{\rk}}\C_{\iota}^{1\times n}$, it holds that $\sigma(\{x_\iota\}_{\iota\in\CE_{\rk}}) = \{\sigma(x_{\sigma^{-1}\circ\iota})\}_{\iota\in\CE_{\rk}}$. Similarly, the $\iota$-component of $\sigma(\gamma_{\infty})$ is
	\[
	\sigma(\gamma_{\sigma^{-1}\circ\iota})= \begin{bmatrix}  \sigma(\mathfrak j_{\sigma^{-1}\circ\iota}(1_{\tau_1'})) \\ 
		\vdots\\
		\sigma(\mathfrak j_{\sigma^{-1}\circ\iota}(1_{\tau_n'}))\end{bmatrix}\in\RG(\C_{\iota}),
	\]
	where $\CE_\RK(\sigma^{-1}\circ\iota)=\{\tau_1'\prec\cdots\prec\tau_n'\}$ and $1_{\tau_i'}$ is the identity element of $\C_{\tau_i'}$ ($i=1,\dots,n$). The action of $\sigma$ on $\RK\otimes_\BQ\BC$ restricts to 
	an isomorphism $\RK_{\sigma^{-1}\circ\iota}\xrightarrow{\sim} \RK_{\iota}$,  which respects the $\C_{\sigma^{-1}\circ\iota}$-algebra and 
	$\C_{\iota}$-algebra structures via the isomorphism $\sigma: \C_{\sigma^{-1}\circ \iota}\xrightarrow{\sim} \C_{\iota}$.  Hence it gives a bijection between the 
	idempotents $\{1_{\tau_1'},\ldots, 1_{\tau_n'}\}$ and $\{1_{\tau_1},\ldots, 1_{\tau_n}\}$. It follows that there is a permutation matrix $w_{\sigma,\iota}$ 
    such that 
	\[
	\sigma(\gamma_{\sigma^{-1}\circ\iota}) = w_{\sigma,\iota} \cdot\gamma_\iota. 
	\]
By definition, it is clear that
\begin{equation}\label{permutationsign}
\det(w_{\sigma,\iota})=(-1)^{p(\sigma,\sigma^{-1}\circ\iota)}.
\end{equation}

We find that
\[
\begin{aligned}
    \sigma(u'_{^\rho\check\chi}) \ & =   \sigma(\gamma_\infty^{-1}).\sigma(u_{^\rho\check\chi})\\
    & = \left(\gamma_\infty^{-1}\cdot \left\{w^{-1}_{\sigma,\iota}\right\}_{\iota\in\CE_{\rk}}\right).u_{^{\sigma\circ\rho}\check\chi} &\qquad(\text{by }\eqref{sigmaurhochi}) \\
    		& =   \prod_{\iota\in\CE_\rk^-({}^{\sigma\circ\rho}\check\chi)}\det(w_{\sigma,\iota})\cdot\gamma_\infty^{-1}.
		u_{^{\sigma\circ\rho}\check\chi} &\qquad(\text{by }\eqref{hw})\\
        &=\prod_{\iota\in\CE_{\rk}^-(^\rho\check{\chi})}(-1)^{p(\sigma,\iota)}\cdot u'_{^{\sigma\circ\rho}\check\chi} &\qquad(\text{by }\eqref{permutationsign}),
\end{aligned}
\]
 which completes the proof of the lemma.
\end{proof}

\subsection{The Eisenstein cohomology and modular symbol}

Define 
\[
\RG':=\GL_1\backslash(\RG\times\RH),
\]
where $\GL_1$ is viewed as an algebraic subgroup of $\RG\times\RH$ via the diagonal embedding. Put
\[
\mathcal{X}:=\RG'(\rk)\backslash\RG'(\A_{\rk})/(\widetilde{K}_{\infty}'\times\widetilde{K}_{\infty}^{\RH}).
\]
For any open compact subgroup $K_{\mathrm{f}}$ of $\RG'(\A_{\rk,\mathrm{f}})$, the representation $F_{^{\rho}\check\chi}^{\vee}\otimes(\C\cdot u'_{^\rho\check\chi})$ of $\RG'(\rk\otimes_{\BQ}\C)$ defines a sheaf on $\CX/K_{\mathrm{f}}$, which is still denoted by $F_{^{\rho}\check\chi}^{\vee}\otimes(\C\cdot u'_{^\rho\check\chi})$. We consider the sheaf cohomology space
\[
\RH^{d_n}(\CX,F_{^{\rho}\check\chi}^{\vee}\otimes(\C\cdot u'_{^\rho\check\chi})):=\lim_{\substack{\longrightarrow\\ K_{\mathrm{f}}}}\RH^{d_n}(\CX/K_{\mathrm{f}},F_{^{\rho}\check\chi}^{\vee}\otimes(\C\cdot u'_{^\rho\check\chi})).
\]
Similarly, we define
\[
\mathcal{X}^{\RH}:=\RH'(\rk)\backslash\RH'(\A_{\rk})/\widetilde{K}_{\infty}^{\RH}
\]
and consider the sheaf cohomology space
\[
\RH^0(\CX^{\RH},\C):=\lim_{\substack{\longrightarrow\\ K^{\RH}_{\mathrm{f}}}}\RH^{0}(\CX^{\RH}/K^{\RH}_{\mathrm{f}},\C),
\]
where the direct limit runs through all open compact subgroups $K^{\RH}_{\mathrm{f}}$ of $\RH'(\A_{\rk,\mathrm{f}})$ and $\C$ is viewed as the trivial representation of $\RH'(\rk\otimes_{\BQ}\C)$. On the cohomological level, the Eisenstein series construction \eqref{eisensteinseries} induces a map
\begin{equation}
	\label{eis}
	\mathrm{Eis}_{^{\rho}\chi}:\CH(I_{^{\rho}\check\chi})\otimes\CH({^{\rho}\chi})\to\RH^{d_n}(\CX,F_{^{\rho}\check\chi}^{\vee}\otimes(\C\cdot u'_{^\rho\check\chi})).
\end{equation}
See \cite{JLLS} for more details. 

We have the diagonal embedding $\RH'(\A_{\rk})\hookrightarrow\RG'(\A_{\rk})$ induced by \eqref{i}, which further induces a map
\[
\mathfrak{i}:\CX^{\RH}\to\CX.
\]
We define the modular symbol
\begin{equation}
	\CP_{^{\rho}\chi}:\CH(I_{^{\rho}\check\chi})\otimes\CH({^{\rho}\chi})\otimes\mathfrak{O}_{\RH}\otimes\mathfrak{M}_{\mathrm{f}}\to\C
\end{equation}
as the composition
\begin{eqnarray*}
	\CP_{^{\rho}\chi} &:& \CH(I_{^{\rho}\check\chi})\otimes\CH({^{\rho}\chi})\otimes\mathfrak{O}_{\RH}\otimes\mathfrak{M}_{\mathrm{f}}\\
	&\xrightarrow{\mathrm{Eis}_{^{\rho}\chi}}&\RH^{d_n}(\CX,F_{^{\rho}\check\chi}^{\vee}\otimes(\C\cdot u'_{^\rho\check\chi}))\otimes\mathfrak{O}_{\RH}\otimes\mathfrak{M}_{\mathrm{f}}\\
	& \xrightarrow{\langle\cdot,\cdot\rangle\,\circ\,\frak i^*} & \RH^{d_n}(\CX^{\RH},\C)\otimes\mathfrak{O}_{\RH}\otimes\mathfrak{M}_{\mathrm{f}}\\
	& \xrightarrow{\int_{\CX^{\RH}}} & \C.
\end{eqnarray*}
Here the second arrow is the restriction of cohomology with respect to the invariant pairing $\langle\cdot,\cdot\rangle:F_{^{\rho}\check\chi}^{\vee}\times F_{^{\rho}\check\chi}\to\C$; while the third arrow 
\begin{equation}\label{PP}
	\int_{\CX^{\RH}}:\RH^{d_n}(\CX^{\RH},\C)\otimes\mathfrak{O}_{\RH}\otimes\mathfrak{M}_{\mathrm{f}}\to\C
\end{equation}
is the pairing with the fundamental class, which is well-defined since $\RH'$ is anisotropic. See also \cite[Section 5.3]{Har87}.

Write for short
\[
\begin{aligned}
	\CH(I_{^\rho\check\chi},{^{\rho}\chi})_{\mathrm{loc}}&:=	\CH(I_{{^{\rho}\check\chi}_{\infty}})\otimes\CH({^{\rho}\chi}_{\infty})\otimes\mathfrak{O}_{\RH}\otimes I_{{^{\rho}\check\chi}_{\mathrm{f}}}\otimes{^{\rho}\chi}_{\mathrm{f}}\otimes\mathfrak{M}_{\mathrm{f}},\\
	\CH(I_{^\rho\check\chi},{^{\rho}\chi})_{\mathrm{glob}}&:=	\CH(I_{^{\rho}\check\chi})\otimes\CH({^{\rho}\chi})\otimes\mathfrak{O}_{\RH}\otimes\mathfrak{M}_{\mathrm{f}}.
\end{aligned}
\]
They are related by the canonical isomorphism
\[
\iota_{\mathrm{can}}:\CH(I_{^\rho\check\chi},{^{\rho}\chi})_{\mathrm{loc}}\xrightarrow{\sim}\CH(I_{^\rho\check\chi},{^{\rho}\chi})_{\mathrm{glob}}.
\]
The global modular symbol is related to the archimedean modular symbol and non-archimedean toroidal integral by the following commutative diagram:
\begin{equation}\label{frontback}
	\begin{CD}
		\CH(I_{^\rho\check\chi},{^{\rho}\chi})_{\mathrm{loc}} @>\CP_{{^{\rho}\chi}_{\infty}}\otimes\RZ^{\circ}_{\mathrm{f}}>>\C\\
		@V\iota_{\mathrm{can}}VV @VV\cdot\frac{\RL(0,{^{\rho}\chi})}{\RL(0,{^{\rho}\check{\chi}})}V\\
		\CH(I_{^\rho\check\chi},{^{\rho}\chi})_{\mathrm{glob}} @>\CP_{^{\rho}\chi}>>\C.
	\end{CD}
\end{equation}
This is essentially due to the unfolding process and the Euler product factorization in Section \ref{sec:global}.

\subsection{$\mathrm{Aut}(\C)$-equivariance of the modular symbol}
Recall that $\mathfrak{O}_{\RH}$ has a natural $\BQ$-rational structure. We say that an element of $\mathfrak{M}_{\mathrm{f}}$ is rational if   every open compact subgroup of $\RH'(\A_{\mathrm{f}})$ has rational total volume with respect to it. All such rational elements form a rational structure of $\mathfrak{M}_{\mathrm{f}}$. For every $\sigma\in\mathrm{Aut}(\C)$, we define $\sigma$-linear isomorphisms
\begin{equation}
	\label{sigmaMO}
	\begin{aligned}
		\sigma:\mathfrak{O}_{\RH}&\to\mathfrak{O}_{\RH},&\qquad\sigma:\mathfrak{M}_{\mathrm{f}}&\to\mathfrak{M}_{\mathrm{f}},
	\end{aligned}
\end{equation}
with respect to the aforementioned rational structures.

For every $\sigma\in\mathrm{Aut}(\C)$, in view of Lemma \ref{sigmav}, we define a $\sigma$-linear isomorphism
\[
\begin{aligned}
	\sigma:\CH({^{\rho}\chi}_{\infty})&\to\CH({^{\sigma\circ\rho}\chi}_{\infty}),\\
	c\cdot[u'_{^{\rho}\check\chi}\otimes{^{\rho}\chi}_{\infty}]&\mapsto\sigma(c)\cdot \prod_{\iota\in\CE_{\rk}^-(^\rho\check{\chi})}(-1)^{p(\sigma,\iota)}\cdot[u'_{^{\sigma\circ\rho}\check\chi}\otimes{^{\sigma\circ\rho}\chi}_{\infty}]\qquad (c\in\C^{\times}).
\end{aligned}
\]
We define another $\sigma$-linear isomorphism
\[
\begin{aligned}
	\sigma:\CH({^{\rho}\chi})&\to\CH({^{\sigma\circ\rho}\chi})
\end{aligned}
\]
such that the diagram
\begin{equation}\label{loc1}
\begin{CD}
    \CH({^{\rho}\chi}_{\infty})\otimes{^\rho\chi}_{\mathrm{f}} @>\iota_{\mathrm{can}}>> \CH({^{\rho}\chi})\\
    @V\sigma\otimes\sigma VV @VV\sigma V\\
    \CH({^{\sigma\circ\rho}\chi}_{\infty})\otimes{^{\sigma\circ\rho}\chi}_{\mathrm{f}} @>\iota_{\mathrm{can}}>>\CH({^{\sigma\circ\rho}\chi})
\end{CD}
\end{equation}
commutes.

For every $\sigma\in\mathrm{Aut}(\C)$, we have a factorization $I_{{^{\sigma\circ\rho}\check\chi}}=I_{{^{\sigma\circ\rho}\check\chi}_{\infty}} \otimes I_{{^{\sigma\circ\rho}\check\chi}_{\mathrm{f}}}$ and a $\sigma$-linear isomorphism
\begin{equation}
	\sigma:I_{{^{\rho}\check\chi}_{\mathrm{f}}}\to I_{{^{\sigma\circ}{^{\rho}\check\chi}_{\mathrm{f}}}},\qquad\varphi\mapsto\sigma\circ\varphi.
\end{equation}
In \cite[Section 3.4]{JLLS}, we have defined $\sigma$-linear isomorphisms
\[
\sigma:\CH(I_{{^{\rho}\check\chi}_{\infty}})\to\CH(I_{{^{\sigma\circ}{^{\rho}\check\chi}_{\infty}}})\qquad\text{and}\qquad\sigma:\CH(I_{^{\rho}\check\chi})\to\CH(I_{^{\sigma\circ\rho}\check\chi}),
\]
such that the diagram
\begin{equation}\label{loc2}
\begin{CD}
    \CH(I_{{^{\rho}\check\chi}_{\infty}})\otimes I_{^\rho\chi_{\mathrm{f}}} @>\iota_{\mathrm{can}}>> \CH(I_{^{\rho}\check\chi})\\
    @V\sigma\otimes\sigma VV @VV\sigma V\\
    \CH(I_{{^{\sigma\circ}{^{\rho}\check\chi}_{\infty}}})\otimes I_{{^{\sigma\circ\rho}\chi_{\mathrm{f}}}} @>\iota_{\mathrm{can}}>> \CH(I_{^{\sigma\circ\rho}\check\chi})
\end{CD}
\end{equation}
commutes. For every $\rho\in\CE_{\RE}$, we have fixed a generator $[\kappa_{^{\rho}\check\chi}]\in\CH(I_{{^{\rho}\check\chi}_{\infty}})$ in \eqref{generator}, whose behavior under the $\mathrm{Aut}(\C)$-action is given by the following lemma.

\begin{lemp}\label{sigmakappa}
	For every $\sigma\in\mathrm{Aut}(\C)$, we have that
	\[
	\sigma\left([\kappa_{^\rho\check\chi}]\right)=(-1)^{(n-1)p_0(\sigma)}\cdot[\kappa_{^{\sigma\circ\rho}\check\chi}],
	\]
    where $p_0(\sigma)$ is given in Section \ref{sec:order}.
\end{lemp}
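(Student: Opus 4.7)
The plan is to compute $\sigma([\kappa_{^\rho\chi}])$ by applying $\sigma$ to both sides of the defining equation \eqref{kappa}
\[
[\kappa_{^\rho\chi}]\left(\bigwedge_{\iota\in\CE_\rk^+({^{\rho}\check{\chi}})}\Ad_{\gamma_\infty^{-1}}(e_1^\iota\wedge\cdots\wedge e_{n-1}^\iota)\right)=\gamma_\infty^{-1}.\Psi_{\nu_\circ},
\]
and comparing with the analogous equation for $[\kappa_{^{\sigma\circ\rho}\chi}]$. Since $\CH(I_{{^{\rho}\chi}_\infty})=\Hom_{\widetilde{\mathfrak{k}}'_\infty}(\Ad_{\gamma_\infty^{-1}}(\Xi_n),\Xi_{^{\rho}\chi}\otimes F_{^{\rho}\chi}^\vee)$ is one-dimensional, the desired sign is determined by comparing how $\sigma$ transforms the source wedge and the target vector $\gamma_\infty^{-1}.\Psi_{\nu_\circ}$, each relative to its $^{\sigma\circ\rho}\chi$-analog.

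On the source side, the natural $\sigma$-action on $\mathfrak{g}_\infty$ (via the $\BC$-factor of $\rk\otimes_\BQ\BC$) sends $\varepsilon_{i,j}^\iota\mapsto\varepsilon_{i,j}^{\sigma\circ\iota}$, and hence $e_i^\iota\mapsto e_i^{\sigma\circ\iota}$. The identity $e_i^\iota=e_i^{\bar\iota}$, clear from the defining formula $e_i^\iota=\varepsilon_{i,i}^\iota+\varepsilon_{i,i}^{\bar\iota}-\varepsilon_{n,n}^\iota-\varepsilon_{n,n}^{\bar\iota}$, shows that each $e_i^\iota$ depends only on $\iota|_{\rk_0}\in\CE_{\rk_0}$. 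Consequently, up to a sign reflecting the order-mismatch between $\CE_\rk^+(^\rho\check\chi)$ and $\CE_{\rk_0}$ via restriction, the outer wedge equals a canonical wedge indexed by $\CE_{\rk_0}$ in its natural order, independent of $\rho$. Applying $\sigma$ to this canonical wedge permutes its $r=[\rk_0:\BQ]$ blocks of size $n-1$ by $\sigma|_{\rk_0}$, producing the predicted sign $(-1)^{(n-1)p_0(\sigma)}$. The transport through $\Ad_{\gamma_\infty^{-1}}$ brings in an additional $\Ad_{W_\sigma^{-1}}$ via $\sigma(\gamma_\infty)=W_\sigma\gamma_\infty$ from the proof of Lemma \ref{sigmav}, where $W_\sigma=(w_{\sigma,\iota})_\iota$ collects the block permutation matrices.

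On the target side, $\sigma(\gamma_\infty^{-1}.\Psi_{\nu_\circ})=\gamma_\infty^{-1}.(W_\sigma^{-1}.\sigma(\Psi_{\nu_\circ}))$. The $\sigma$-action on $\Psi_{\nu_\circ}^{^\rho\chi}\in\Xi_{^{\rho}\chi}\otimes F_{^{\rho}\chi}^\vee$ is determined factorwise: on each $\varphi_\beta$ it is the relabeling $\iota\mapsto\sigma\circ\iota$ in the embeddings of its defining formula, while on each $u_\lambda^\vee$ it is dual to \eqref{sigmaurhochi}. The symmetric pattern $\nu_\circ=\{(1,\dots,1)\}_\iota$ makes the sum defining $\Psi_{\nu_\circ}$ behave uniformly under the reindexing, so $\sigma(\Psi_{\nu_\circ}^{^\rho\chi})$ matches $\Psi_{\nu_\circ}^{^{\sigma\circ\rho}\chi}$ up to an order-mismatch sign identical to the one that appeared on the source side. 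The subsequent translation by $W_\sigma^{-1}\in K_\infty$ contributes a sign on $\Psi_{\nu_\circ}$ that matches the $\Ad_{W_\sigma^{-1}}$-contribution on the source.

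The main obstacle is precisely this cancellation: the $\Ad_{W_\sigma^{-1}}$-sign on the source and the $W_\sigma^{-1}$-translation sign on the target must cancel, and likewise for the two order-mismatch signs. Since $W_\sigma$ lies in $K_\infty$ and acts compatibly on both the source and target of $[\kappa_{^\rho\chi}]$, the one-dimensionality of the intertwiner space forces the two induced actions on $\CH(I_{^{\rho}\chi_\infty})$ to agree up to a common scalar; evaluating on the explicit weight vectors in $\Psi_{\widetilde\nu}$ and $\Psi_{\nu_\circ}$ (built via the translation functors of \cite[Section 2.3]{JLS1}) shows that this scalar exactly absorbs the residual order-mismatch, leaving only the clean sign $(-1)^{(n-1)p_0(\sigma)}$.
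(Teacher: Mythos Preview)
Your approach has a genuine gap and also misses the key simplification that makes the paper's proof short.

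First, the starting point: equation \eqref{kappa} is not the \emph{defining} equation for $[\kappa_{^\rho\chi}]$. The generator is defined by its value on the lowest weight wedge $\bigwedge_\iota\Ad_{\gamma_\infty^{-1}}(e_{1,n}^\iota\wedge\cdots\wedge e_{n-1,n}^\iota)$, with target the lowest weight vector $\gamma_\infty^{-1}.\Psi_{\widetilde\nu}$; equation \eqref{kappa} is a derived consequence. More importantly, the $\sigma$-linear map on $\CH(I_{{^\rho\chi}_\infty})$ is not something you can reconstruct by simply ``applying $\sigma$ to both sides'' --- it is a specific construction from \cite[Sections 3.3, 3.4]{JLS2}. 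The paper's proof invokes that definition directly, which yields the characterization
\[
\sigma([\kappa_{^\rho\chi}])\Big(\bigwedge_{\iota\in\CE_\rk^+({^\rho\check\chi})}\Ad_{\gamma_\infty^{-1}}(e_{1,n}^{\sigma^{-1}\circ\iota}\wedge\cdots\wedge e_{n-1,n}^{\sigma^{-1}\circ\iota})\Big)=\gamma_\infty^{-1}.\Psi_{\widetilde\nu}.
\]
Notice that the right-hand side is \emph{already} the same target that defines $[\kappa_{^{\sigma\circ\rho}\chi}]$, and the conjugator $\gamma_\infty^{-1}$ appears unchanged. Thus no target-side analysis is needed at all, no $W_\sigma$ enters, and the entire proof reduces to a source-side reordering of the outer wedge, which immediately gives $(-1)^{(n-1)p_0(\sigma)}$.

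By contrast, your route forces you to track how $\sigma$ moves $\gamma_\infty^{-1}.\Psi_{\nu_\circ}$, introducing the permutation matrices $W_\sigma$ and multiple ``order-mismatch'' signs on both source and target. Your final paragraph asserts that these all cancel, but this is not demonstrated: the appeal to ``one-dimensionality of the intertwiner space'' does not by itself pin down the scalar, and the claim that ``evaluating on the explicit weight vectors \ldots\ shows that this scalar exactly absorbs the residual order-mismatch'' is the heart of the computation and is left unperformed. Without actually carrying out that cancellation --- or, equivalently, without invoking the \cite{JLS2} characterization that renders it unnecessary --- the argument is incomplete.
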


\begin{proof}
    By definition of the map $\sigma:\CH(I_{{^{\rho}\check\chi}_{\infty}})\to\CH(I_{{^{\sigma\circ}{^{\rho}\check\chi}_{\infty}}})$ in \cite[Sections 3.3, 3.4]{JLLS}, $\sigma\left([\kappa_{^\rho\check\chi}]\right)$ is characterized by
    \[
   \sigma\left([\kappa_{^\rho\check\chi}]\right) \left(\bigwedge_{\iota\in\CE_{\rk}^+({^{\rho}\check{\chi}})}\Ad_{\gamma_{\infty}^{-1}}\left(e_{1,n}^{\sigma\circ\iota}\wedge e_{2,n}^{\sigma\circ\iota}\wedge\cdots\wedge e_{n-1,n}^{\sigma\circ\iota}\right)\right)=\gamma^{-1}_{\infty}.\Psi_{\widetilde{\nu}}.
    \]
    
Note that 
\[
^\rho\check{\chi}_\iota={^{\sigma\circ\rho}\check{\chi}_{\sigma\circ \iota}}\quad\textrm{for all } \iota\in \CE_\rk. 
\]
Thus 
\[
\sigma\circ (\cdot
) : \CE_{\rk}^+(^\rho\check{\chi})\to\CE_{\rk}^+(^{\sigma\circ\rho}\check{\chi})
\]
is a well-defined bijective map. Moreover, we have a commutative diagram
\[
\begin{CD}
   \CE_{\rk}^+(^\rho\check{\chi}) @>\sigma\circ (\cdot
) >> \CE_{\rk}^+(^{\sigma\circ\rho}\check{\chi})\\
    @V\textrm{restriction} VV @VV\textrm{restriction} V\\
     \CE_{\rk_0}@>\sigma\circ (\cdot
) >>  \CE_{\rk_0}
\end{CD},
\]
with both horizontal arrows bijective. 
Then it is straightforward to verify that 
\[
\begin{aligned}
    &\bigwedge_{\iota\in\CE_{\rk}^+({^{\rho}\check{\chi}})}\Ad_{\gamma_{\infty}^{-1}}\left(e_{1,n}^{\sigma_1(\iota)}\wedge e_{2,n}^{\sigma_1(\iota)}\wedge\cdots\wedge e_{n-1,n}^{\sigma_1(\iota)}\right)\\
    =\, & (-1)^{(n-1)p_0(\sigma)}\cdot \bigwedge_{\iota\in\CE_{\rk}^+({^{\sigma\circ\rho}\check{\chi}})}\Ad_{\gamma_{\infty}^{-1}}\left(e_{1,n}^{\iota}\wedge e_{2,n}^{\iota}\wedge\cdots\wedge e_{n-1,n}^{\iota}\right),
\end{aligned}
\]
which implies the lemma.
\end{proof}

For every $\sigma\in\mathrm{Aut}(\C)$, define $\sigma$-linear isomorphisms
\[
\sigma: \CH(I_{^\rho\check\chi},{^{\rho}\chi})_{\mathrm{loc}}\to\CH(I_{^{\sigma\circ\rho}\check\chi},{^{\sigma\circ\rho}\chi})_{\mathrm{loc}}
\]
and
\[
\sigma:\CH(I_{^\rho\check\chi},{^{\rho}\chi})_{\mathrm{glob}}\to\CH(I_{^{\sigma\circ\rho}\check\chi},{^{\sigma\circ\rho}\chi})_{\mathrm{glob}} 
\]
by taking tensor products of the various aforementioned $\sigma$-linear isomorphisms. Combining \eqref{sigmaMO}, \eqref{loc1} and \eqref{loc2}, we have the commutative diagram
\begin{equation}
    \label{left}
    \begin{CD}
    \CH(I_{^\rho\check\chi},{^{\rho}\chi})_{\mathrm{loc}} @>\iota_{\mathrm{can}}>> \CH(I_{^\rho\check\chi},{^{\rho}\chi})_{\mathrm{glob}} \\
		@V\sigma VV @V\sigma VV \\
		\CH(I_{^{\sigma\circ\rho}\check\chi},{^{\sigma\circ\rho}\chi})_{\mathrm{loc}} @>\iota_{\mathrm{can}}>> \CH(I_{^{\sigma\circ\rho}\check\chi},{^{\sigma\circ\rho}\chi})_{\mathrm{glob}}.
        \end{CD}
\end{equation}
On the other hand, the maps in \eqref{sigmaF} also define a $\sigma$-linear isomorphism of the corresponding sheaves, which further induces a $\sigma$-linear isomorphism of sheaf cohomology spaces
\begin{equation}\label{sigmasheafG}
	\sigma:\RH^{d_n}(\CX,F_{^{\rho}\check\chi}^{\vee}\otimes(\C\cdot u'_{^\rho\check\chi}))\to\RH^{d_n}(\CX,F_{{^{\sigma\circ\rho}\check\chi}}^{\vee}\otimes(\C\cdot u'_{^{\sigma\circ\rho}\check\chi})).
\end{equation}
By \cite[Proposition 2.6.1]{Har87} and \cite[Theorem 1.3]{JLLS}, the diagram
\begin{equation}\label{bottom}
	\begin{CD}
		\CH(I_{^\rho\check\chi},{^{\rho}\chi})_{\mathrm{glob}} @>\mathrm{Eis}_{^\rho\chi}>>\RH^{d_n}(\CX,F_{^{\rho}\check\chi}^{\vee}\otimes(\C\cdot u'_{^\rho\check\chi}))\otimes\mathfrak{O}_{\RH}\otimes\mathfrak{M}_{\mathrm{f}}\\
		 @V\sigma VV @VV\sigma\otimes\sigma\otimes\sigma V\\
		 \CH(I_{^{\sigma\circ\rho}\check\chi},{^{\sigma\circ\rho}\chi})_{\mathrm{glob}} @>\mathrm{Eis}_{^{\sigma\circ\rho}\chi}>>\RH^{d_n}(\CX,F_{{^{\sigma\circ\rho}\check\chi}}^{\vee}\otimes(\C\cdot u'_{^{\sigma\circ\rho}\check\chi}))\otimes\mathfrak{O}_{\RH}\otimes\mathfrak{M}_{\mathrm{f}}
	\end{CD}
\end{equation}
commutes for every $\sigma\in\mathrm{Aut}(\C)$. Combining \eqref{left}, \eqref{bottom} and using the fact that the linear functional \eqref{PP} is defined over $\BQ$, we obtain the commutative diagram
\begin{equation}
	\label{leftbottom}
	\begin{CD}
		\CH(I_{^\rho\check\chi},{^{\rho}\chi})_{\mathrm{loc}} @>\iota_{\mathrm{can}}>> \CH(I_{^\rho\check\chi},{^{\rho}\chi})_{\mathrm{glob}} @>\CP_{^{\rho}\chi}>>\C\\
		@V\sigma VV @V\sigma VV @VV\sigma V\\
		\CH(I_{^{\sigma\circ\rho}\check\chi},{^{\sigma\circ\rho}\chi})_{\mathrm{loc}} @>\iota_{\mathrm{can}}>> \CH(I_{^{\sigma\circ\rho}\check\chi},{^{\sigma\circ\rho}\chi})_{\mathrm{glob}} @>\CP_{{^{\sigma\circ\rho}\chi}}>>\C.
	\end{CD}
\end{equation}

	\subsection{Proof of the main theorem}

    We now complete the proof of Theorem \ref{mainthm'}. Write for short
	\[
	\CL:=\frac{\RL(0,{^{\rho}\chi})}{\RL(0,{^{\rho}\check{\chi}})},\qquad{^{\sigma}\CL}:=\frac{\RL(0,{^{\sigma\circ}{^{\rho}\chi}})}{\RL(0,{^{\sigma\circ}{^{\rho}\check{\chi}}})},
	\]
	and form the following cubic diagram
	\begin{equation}
		\label{diagram}
		\xymatrixrowsep{0.3in}
		\xymatrixcolsep{0.35in}
		\xymatrix{
			\mathcal{H}(I_{^\rho\check\chi},{^{\rho}\chi})_{\mathrm{loc}} \ar[rd]^{\sigma} \ar[ddd]_{\iota_{\mathrm{can}}} \ar[rrr]^{ \RZ_{{^{\rho}\chi}_{\mathrm{f}}}^{\circ}\otimes  (-1)^{\mathbf{n}(\sigma,^\rho\chi)}\CP^{\circ}_{{^{\rho}\chi}_{\infty}} } 
			&&& \BC \ar^\sigma[rrd] \ar^{ \sigma}[rrd] \ar[ddd]^{(-1)^{\mathbf{n}(\sigma,^\rho\chi)}\mathcal{L}}|!{[dll];[dr]}\hole & \\
			&\mathcal{H}(I_{^{\sigma\circ\rho}\check\chi},{^{\sigma\circ\rho}\chi})_{\mathrm{loc}}  \ar[ddd]_{\iota_{\mathrm{can}}}  \ar[rrrr]^(0.4){ \RZ^{\circ}_{{^{\sigma\circ\rho}\chi}_{\mathrm{f}}}\otimes\CP_{^{\sigma\circ}{^{\rho}\chi}_{\infty}} } 
			&&& & \BC  \ar[ddd]^{^{\sigma}\mathcal{L}}\\
			&&& & \\
			\mathcal{H}(I_{^\rho\check\chi},{^{\rho}\chi})_{\mathrm{glob}} \ar[rd]_{\sigma} \ar[rrr]^(0.6){\CP_{^{\rho}\chi}} |!{[ur];[dr]}\hole
			&& & \BC \ar[drr]^{\sigma}  & \\
			& 	\mathcal{H}(I_{^{\sigma\circ\rho}\check\chi},{^{\sigma\circ\rho}\chi})_{\mathrm{glob}}\ar[rrrr]^{\CP_{^{\sigma\circ\rho}\chi}} 
			&&& & \BC.
		}
	\end{equation}
	The front and back squares are commutative by \eqref{frontback}. The left and bottom squares are commutative by \eqref{leftbottom}. 
	
	Let $v$ be a non-archimedean place of $\rk$. For every $\sigma\in\mathrm{Aut}(\C)$, we claim  that
	\[
	\RZ_v^{\circ}(s,\sigma\circ\varphi_v;\sigma\circ{^{\rho}\chi}_v,\sigma(\mathrm{d}x))=\sigma\left(\RZ_v^{\circ}(s,\varphi_v;{^{\rho}\chi}_v,\mathrm{d}x)\right)
	\]
	for every $\varphi_v\in I_{{^{\rho}\check\chi}_v}$, $\mathrm{d}x\in\mathfrak{M}_v$, and $s\in\Z$. When $s$ is sufficiently large, this can be checked straightforwardly. In view of \eqref{theta=I}, \eqref{normalizedlocal}, and Lemma \ref{schwartzsection}, Tate's thesis implies that  
	\[
	s\mapsto\RZ_v^{\circ}(s,\varphi_v;{^{\rho}\chi}_v,\mathrm{d}x)
	\]
	is an element of the ring $\C[q_v^s,q_v^{-s}]$, where $q_v$ is the cardinality of the residue field of $\rk_v$. This implies the claim in general. Together with Lemma \ref{lem:unramified}, we therefore obtain the commutative diagram
	\begin{equation}\label{top1}
		\begin{CD}
			I_{{^{\rho}\check\chi}_{\mathrm{f}}}\otimes{^{\rho}\chi}_{\mathrm{f}}\otimes\mathfrak{M}_{\mathrm{f}} @>\RZ_{{^{\rho}\chi}_{\mathrm{f}}}^{\circ}>>\C\\
			@V\sigma VV @VV\sigma V\\
			I_{{^{\sigma\circ\rho}\check\chi}_{\mathrm{f}}}\otimes{^{\sigma\circ\rho}\chi}_{\mathrm{f}}\otimes\mathfrak{M}_{\mathrm{f}} @>\RZ^{\circ}_{^{\sigma\circ}{^{\rho}\chi}_{\mathrm{f}}}>>\C.
		\end{CD}
	\end{equation}
	
	For the archimedean aspect, in view of Lemmas \ref{lem:archimedeanmodularsymbol}, \ref{sigmav} and \ref{sigmakappa}, we have that
\[
	\begin{aligned}
			&\CP_{{^{\sigma\circ\rho}\chi}_{\infty}}\left(\sigma([\kappa_{^{\rho}\check\chi}])\otimes\sigma([u'_{^{\rho}\check\chi}\otimes{^{\rho}\chi}_{\infty}])\otimes\sigma(\mathbf{o}'_{\RH})\right)\\
            =\ &\CP_{{^{\sigma\circ\rho}\chi}_{\infty}}\left((-1)^{(n-1)p_0(\sigma)}[\kappa_{^{\sigma\circ\rho}\check\chi}]\otimes(-1)^{\sum_{\iota\in\CE_{\rk}^-(^{\rho}\check{\chi})}p(\sigma,\iota)}[u'_{^{\sigma\circ\rho}\check\chi}\otimes{^{\sigma\circ}{^{\rho}\chi}_{\infty}}]\otimes\mathbf{o}'_{\RH}\right)\\
			=\ &(-1)^{\mathbf{n}(\sigma,^\rho\chi)}\cdot\CP_{{^{\sigma\circ\rho}\chi}_{\infty}}\left([\kappa_{^{\sigma\circ\rho}\check\chi}]\otimes[u'_{^{\sigma\circ\rho}\check\chi}\otimes{^{\sigma\circ}{^{\rho}\chi}_{\infty}}]\otimes\mathbf{o}'_{\RH}\right)
            = (-1)^{\mathbf{n}(\sigma,^\rho\chi)}.
	\end{aligned}
	\]
 That is, the diagram
	\begin{equation}
		\label{top2}
		\begin{CD}
			\CH(I_{{^{\rho}\check\chi}_{\infty}})\otimes\CH({^{\rho}\chi}_{\infty})\otimes\mathfrak{O}_{\RH} @>(-1)^{\mathbf{n}(\sigma,^\rho\chi)}\CP_{{^{\rho}\chi}_{\infty}}>>\C\\
			@V\sigma VV @VV\sigma V\\
			\CH(I_{{^{\sigma\circ\rho}\check\chi}_{\infty}})\otimes\CH({^{\sigma\circ\rho}\chi}_{\infty})\otimes\mathfrak{O}_{\RH} @>\CP_{{^{\sigma\circ\rho}\chi}_{\infty}}>>\C
		\end{CD}
	\end{equation}
	commutes for every $\sigma\in\mathrm{Aut}(\C)$.

	Combining \eqref{top1} and \eqref{top2}, we see that the top square in \eqref{diagram} is commutative. Now all squares except the right one in \eqref{diagram} are shown to be commutative, which forces the right square to be commutative as well (since the top horizontal arrow is surjective).  This completes the proof of \eqref{sigmaL} which implies \eqref{5.17} by \eqref{omegasign}.
	
	\section*{Acknowledgments}
D. Liu was supported in part by National Key R \& D Program of China No. 2022YFA1005300 and National Natural Science Foundation of China No. 12526208.  B. Sun was supported in part by National Key R \& D Program of China No. 2022YFA1005300  and New Cornerstone Science foundation.
	

\end{document}